\numberwithin{equation}{section}
\newtheorem{theorem}[equation]{Theorem}
\newtheorem{cor}[equation]{Corollary}
\newtheorem{lemma}[equation]{Lemma}
\newtheorem{remark}[equation]{Remark}
\newtheorem{alg}[equation]{Algorithm}
\newtheorem{assumption}[equation]{Assumption}
\newtheorem{definition}[equation]{Definition}
\newcommand{\be}{\begin{equation}}
	\newcommand{\ee}{\end{equation}}
\newcommand{\bea}{\begin{eqnarray}}
	\newcommand{\eea}{\end{eqnarray}}
\newcommand{\beas}{\begin{eqnarray*}}
	\newcommand{\eeas}{\end{eqnarray*}}
\newcommand{\vertiii}[1]{{\left\vert\kern-0.25ex\left\vert\kern-0.25ex\left\vert #1
		\right\vert\kern-0.25ex\right\vert\kern-0.25ex\right\vert}}
\newcommand{\normiii}[1]{{\left\vert\kern-0.25ex\left\vert\kern-0.25ex\left\vert #1
		\right\vert\kern-0.25ex\right\vert\kern-0.25ex\right\vert}}
\begin{document}
	\title{Anderson acceleration of a Picard solver for the Oldroyd-B model of viscoelastic fluids}
	
	\author{
	Duygu Vargun\thanks{\small
			Computer Science and Mathematics Division, Oak Ridge National Laboratory, Oak Ridge, TN 37831, vargund@ornl.gov; partially supported by NSF grant DMS 2011490 and Department of Energy grant DE-AC05-00OR22725.\\ This research is based upon work supported by the U.S. Department of Energy Office of Science, Office of Advanced Scientific Computing Research, as part of their Applied Mathematics Research Program. The work was performed at the Oak Ridge National Laboratory, which is managed by UT-Battelle, LLC under Contract No. DE-AC05-00OR22725. The United States Government retains and the publisher, by accepting the article for publication, acknowledges that the United States Government retains a non-exclusive, paid-up, irrevocable, world-wide license to publish or reproduce the published form of this manuscript, or allow others to do so, for the United States Government purposes. The Department of Energy will provide public access to these results of federally sponsored research in accordance with the DOE Public Access Plan (http://energy .gov /downloads /doe -public -access -plan).}	
	\and
    	Igor O. Monteiro\thanks{\small Institute of Mathematics, Statistics and Physics, Federal University of Rio Grande, Rio Grande, RS, Brazil, 474, igor.monteiro@furg.br }
        \and
	Leo G. Rebholz\thanks{\small
			School of Mathematical and Statistical Sciences, Clemson University, Clemson, SC, 29364, rebholz@clemson.edu; partially supported by Department of Energy grant DE-SC0025292.}			
	}
	
	\maketitle
	
	\begin{abstract}{
		We study an iterative nonlinear solver for the Oldroyd-B system describing incompressible viscoelastic fluid flow. We establish a range of attributes of the fixed-point-based solver, together with the conditions under which it becomes contractive and examining the smoothness properties of its corresponding fixed-point function. Under these properties, 
        we demonstrate that the solver meets the necessary conditions for recent Anderson acceleration (AA) framework, thereby showing that AA enhances the solver's linear convergence rate. Results from two benchmark tests illustrate how AA improves the solver's ability to converge as the Weissenberg number is increased.
		}
	\end{abstract}

	\section{Introduction}
	
	We consider herein efficient solvers for viscoelastic fluids. These fluids display both fluid-like and solid-like characteristics: they respond to lower strains with an elastic, solid-like behavior, yet when subjected to sufficiently large stresses, they flow as liquids. Due to this behavior, they are classified as non-Newtonian fluids whose viscosity depends on the shear rate or its history. Examples of applications of viscoelastic fluids are blood, mucus, and synovial fluid, which are important for designing medical devices and treatments in biomedical engineering; polymer melts in extrusion processes, injection molding, and blow molding in the plastics industry; and, sealants, adhesives, and coatings in the construction industry. Given their extensive applications, theoretical insights and empirical evidence regarding these fluids are essential in numerous scientific and industrial domains.
	
Numerically simulating the flow of viscoelastic fluids poses a highly challenging nonlinear problem, mainly because the governing equations exhibit hyperbolic characteristics linked to the physical parameter known as the Weissenberg number $\lambda$.  Common solvers (e.g. the Picard type solver studied herein) are contractive for small $\lambda$, but for moderate or large $\lambda$ the solvers are not contractive and tend to fail.  This is not surprising since the physics of viscoelastic flows gets more complex as $\lambda$ increases: when $\lambda\ll 1$ viscous effects dominate but as $\lambda$ increases the (hyperbolic) elastic effects dominate  \cite{Brown_etal93_viscoelastic,Apelian1988_viscoelastic,Baaijens98_viscoelastic,KEUNINGS1986_viscoelastic}. 
Several approaches to improving solvers for flows characterized by higher $\lambda$ and stress responses have been proposed in the literature, including defect correction methods \cite{Ervin06_viscoelastic}, Lagrangian framework for viscoelastic fluids \cite{PETERA2002_viscoelastic}, alternative formulations \cite{PWW17}, and more \cite{Brown_etal93_viscoelastic,Luo98_viscoelastic}.  
	
 We consider herein the steady Oldroyd-B model \cite{Oldroyd58_viscoelastic} for viscoelastic flow, which is given by the system: in $\Omega \subset \mathbb{R}^d$ ($d=$2 or 3) with Lipschitz continuous boundary:
\begin{equation}\label{OldroydB}
\begin{aligned}
\sigma + \lambda   \left((u\cdot\nabla)\sigma- \nabla u \sigma-\sigma\nabla u^T \right)- 2\alpha  \mathbb{D}(u) &= 0\ \text{in}\ \Omega,\\
-\nabla\cdot\sigma-2(1-\alpha)\nabla\cdot \mathbb{D}(u) + \nabla p &= f\ \text{in}\ \Omega,\\
\nabla\cdot u &= 0\ \text{in}\ \Omega,\\
u&=0\ \text{on}\ \partial\Omega,
\end{aligned}
\end{equation}
where $\sigma$ denotes the polymeric stress tensor, $u$ the velocity vector, $p$ the pressure of fluid, $\lambda$ is the Weissenberg number (interpreted here as the product of the relaxation time of the fluid and a characteristic strain rate), and $\mathbb{D}(u)=\left(\nabla u + \nabla u^T \right)/2$ is the deformation tensor. To guarantee the uniqueness of $p$, we impose that its mean over $\Omega$ is zero. The parameter $\alpha$, which must satisfy  $0<\alpha<1$, can be interpreted as the fraction of viscoelastic viscosity, and $f$ denotes the body force. We note that while Oldroyd-B is commonly used, several other viscoelastic fluid models are also used in practice to describe the behavior of complex fluids, with various models capturing different aspects of viscoelastic behavior (e.g.  the Giesekus model includes an anisotropic drag term to capture the nonlinear behavior of polymer solutions \cite{GIESEKUS1982_viscoelastic} and Johnson-Segalman (a generalization of Oldroyd-B) accounts for nonlinear deformation (slip) between the polymer network and the solvent \cite{JOHNSON77_viscoelastic}).  Existence of a weak solution to the problem \eqref{OldroydB}  has been documented by Renardy \cite{Renardy85_viscoelastic}, and moreover, if $f$ is sufficiently regular and small, the problem admits a unique bounded solution $(u,\sigma,p)\in H^3(\Omega)\times H^2(\Omega) \times H^2(\Omega)$. 

We consider herein a solver enhancement called Anderson acceleration (AA) for the purpose of improving convergence of the nonlinear solvers for Oldroyd-B.  As this approach is likely applicable to many of the various models listed above, we consider a simple setting so as to focus on the AA enhancement: a Picard-type iterative solver with conforming finite elements.  The iteration is defined by:
given $u_{k}$, determine $u_{k+1},\sigma_{k+1},p_{k+1}$ satisfying
\begin{equation}\label{OldroydBPicard}
\begin{aligned}
\sigma_{k+1} + \lambda \left((u_{k}\cdot\nabla)\sigma_{k+1}- \nabla u_{k} \sigma_{k+1} - \sigma_{k+1}\nabla u_{k}^T \right)- 2\alpha  \mathbb{D}(u_{k+1}) &= 0\ \text{in}\ \Omega,\\
-\nabla\cdot\sigma_{k+1}-2(1-\alpha)\nabla\cdot \mathbb{D}(u_{k+1}) + \nabla p_{k+1} &= f\ \text{in}\ \Omega,\\
\nabla\cdot u_{k+1} &= 0\ \text{in}\ \Omega,\\
u_{k+1}&=0\ \text{on}\ \partial\Omega.
\end{aligned}
\end{equation}
The iteration \eqref{OldroydBPicard} may be taken into account as a fixed-point iteration of the form $(u_{k+1},\sigma_{k+1},p_{k+1})=G(u_{k},\sigma_{k},p_{k})$, and we will show in section 3 that it has a contraction ratio that depends directly on $\lambda$. If $\lambda$ is small, then the iteration is contractive and linearly convergent.  However, for larger $\lambda$, the iteration is generally not contractive and convergence is not expected.  

Hence, we employ AA to enhance the convergence behavior of solver \eqref{OldroydBPicard}, as AA is recognized as a potent extrapolation technique that accelerates fixed-point iterations. This includes accelerating iterations that are already converging but also enabling convergence of iterations that would otherwise not converge such as in solvers for Navier-Stokes as the Reynolds number increases \cite{PRX19} and Boussinesq flows as the Rayleigh number increases \cite{PRX21}.  AA was originally introduced by D.G. Anderson in 1965 \cite{Anderson65}, and although it had modest use until 2011, it was after the Walker and Ni paper \cite{WaNi11} that AA caught fire across nearly all of computational science as a simple way to improve most types of nonlinear solvers.  AA has recently been used to in a wide range of problems including various types of Newtonian flow  \cite{LWWY12,PRX19,PRX21, LVX21} and non-Newtonian flow problems \cite{PRV23_AA}, electromagnetic nonlinear and \cite{FAC19_AA} geometry optimization problems \cite{PDZGQL2018_AA}
radiation diffusion and nuclear physics \cite{AJW17,TKSHCP15}, geophysics \cite{Yang2021_AA}
molecular interaction \cite{SM11},  and many others e.g. \cite{WaNi11,K18,LW16,LWWY12,FZB20,WSB21_AA}. Motivated by the proven success of AA in a range of nonlinear problems from above references, we now study it with viscoelastic flow. In this work, we demonstrate both theoretically and numerically that Picard enhanced with AA retains the simplicity of the standard Picard iteration while significantly improving its efficiency and robustness, particularly for larger values of $\lambda$ in \eqref{OldroydBPicard}. Although our focus here is on the steady-state case, it is straightforward to adapt our methods and analysis to AA for solving the nonlinear systems that arise at each time step in the time-dependent problem.

The structure of the paper is as follows. In the next section, we provide the essential notation and preliminary information necessary for our subsequent analysis. In Section 3, we provide the well-posedness and convergence analyses of the discretized fixed-point iteration \eqref{OldroydBPicard} of Oldroyd-B. Section 4 introduces AA for \eqref{OldroydBPicard}, and proves that the iteration's fixed-point function has sufficient smoothness properties so that the recently developed convergence theory for AA can be invoked. In Section 5, we support the theory we discussed in previous sections with numerical tests, and show how AA improves and enables convergence. Lastly, we draw conclusions in Section 6.  The FENICS code we use for AA is given in the Appendix.

\section{Mathematical preliminaries}

The function spaces for the velocity, pressure, and stress for \eqref{OldroydB} are given by (respectively):
\begin{align*}
X&\coloneqq \left(H^1_0(\Omega)\right)^d=\left\{ v\in H^1(\Omega): v=0\ \text{on}\ \partial\Omega \right\},
\\
Q&\coloneqq L^2_0(\Omega)=\left\{ q\in L^2(\Omega): \int_{\Omega} q\ d\Omega = 0 \right\},\\
\Sigma &\coloneqq \left( L^2(\Omega)^{d\times d}\right) \bigcap \left\{ \tau=(\tau_{ij}):\tau_{ij}=\tau_{ji}, u\cdot\nabla\tau\in(L^2(\Omega))^{d\times d} \right\}. 
\end{align*}
where $\Omega\subset\mathbb{R}^d\ (d=2,\ 3)$ domain of the problem which is polygonal for $d=2$ or polyhedral for $d=3$ (or $\partial\Omega\in C^{0,1}$). Additionally, the weakly divergence free subspace of $X$ is given by
\begin{align*}
V&=\left\{ v\in X: \int_{\Omega}q\nabla\cdot v\ d\Omega=0\ \forall q\in L^2_0(\Omega) \right\}.
\end{align*}

The notation $(\cdot,\cdot)$ will be used to denote the $L^2$ inner product with its norm denoted by $\|\cdot\|$. For notational simplicity, we will also use this notation
for the duality pairing between $H^{-1}(\Omega)$ and $X$ with its norm denoted by $\|\cdot\|_{-1}$. Note that, $\|\cdot\|_{\infty}$ denotes then $L^{\infty}(\Omega)-$norm.

In the space $X$,  the Poincar\'e inequality holds \cite{Laytonbook}: there exists a constant $C_P>0$ depending only on $\Omega$ such that for any $\phi\in X$,
\[
\| \phi \| \le C_P \| \nabla \phi \|.
\]
We also use the Korn type inequality \cite{BS08}:
\begin{equation}
\| \nabla \phi \| \le C_K \| \mathbb{D}(\phi) \|,\ \forall \phi\in X. \label{korn}
\end{equation}

\subsection{Finite element discretization of Oldroyd-B model}
We apply finite element methods to discretize equation~\eqref{OldroydB} on a regular, conforming mesh $\tau_h(\Omega)$ with a maximum element size of $h$. The pair of discrete velocity-pressure spaces $(X_h,Q_h)\subset (X,Q)$ must satisfy 
\begin{align}\label{discinfsup}
\inf_{q\in Q_h} \sup_{v\in X_h}\frac{(\nabla \cdot v, q)}{\|q\| \|\nabla v\|}\geq \beta >0,
\end{align}
which is called the inf-sup (or LBB) condition. In \eqref{discinfsup}, $\beta$ is a constant independent of $h$.

We denote $P_m(\tau_h)$ as the set of degree $m$ piecewise polynomial functions that are continuously connected across the entire mesh $\tau_h$. In contrast, $P_m^{disc}(\tau_h)$ represents the set of degree $m$ piecewise polynomial functions that are allowed to have discontinuities between across elements.

One of the common choices in this setting that satisfy \eqref{discinfsup}, is Taylor-Hood elements, $(X_h,Q_h)=(P_2(\tau_h)\cap X, P_1(\tau_h)\cap Q))$ along with the discrete stress space $\Sigma_h=P_1^{disc}(\tau_h)\cap \Sigma$.

We can now write the FEM scheme for \eqref{OldroydB}: Find $(u,p,\sigma)\in (X_h,Q_h,\Sigma_h)$ such that
\begin{equation}\label{weakOldroydB}
\begin{aligned}
(\sigma,\tau) + \lambda  \left( b(u,\sigma,\tau) - (\nabla u \sigma,\tau)-(\sigma\nabla u^T,\tau) \right)- 2\alpha  (\mathbb{D}(u),\tau) &= 0\ \forall \tau\in\Sigma_h,\\
(\sigma,\mathbb{D}(v))+2(1-\alpha)(\mathbb{D}(u),\mathbb{D}(v)) - (p,\nabla\cdot v) &= (f,v)\ \forall v\in X_h, \\
(q,\nabla\cdot u)&=0\ \forall q\in Q_h,
\end{aligned}
\end{equation}
where the trilinear form $b:X_h \times \Sigma_h \times \Sigma_h \rightarrow \mathbb{R}$ is defined by
\begin{align}\label{divform}
b(u,\sigma,\tau)=((u\cdot\nabla)\sigma,\tau)+\frac{1}{2} ((\nabla\cdot u)\sigma,\tau).    
\end{align}
We note $b$ is equivalent to the usual convective form $((u\cdot\nabla)\sigma,\tau)$ when $\| \nabla \cdot u\|=0$. The motivation behind using this divergence form is that \eqref{divform} will vanish if $\sigma=\tau$, even if the first argument is not divergence-free; this is the typical case for finite element method discretizations.  Existence and uniqueness of the weak solution to \eqref{weakOldroydB} is discussed in \cite{Ervin05_viscoelastic,Renardy85_viscoelastic}.

For notational simplicity, we can consider the equivalent problem set in discretely divergence free functions
$$V_h:=\{ v\in X_h: (\nabla\cdot v,q)=0\ \forall q\in Q_h \},$$
such that
\begin{equation}\label{weakdivfreeOldroydB}
\begin{aligned}
(\sigma,\tau) + \lambda  \left( b(u,\sigma,\tau) - (\nabla u \sigma,\tau)-(\sigma\nabla u^T,\tau) \right)- 2\alpha  (\mathbb{D}(u),\tau) &= 0\ \forall \tau\in\Sigma_h,\\
(\sigma,\mathbb{D}(v))+2(1-\alpha)(\mathbb{D}(u),\mathbb{D}(v)) &= (f,v)\ \forall v\in V_h.   \end{aligned}
\end{equation}
For $v\in V_h$ and $s \in \Sigma_h$, the notation $\| (\cdot,\cdot) \|_*$ is used for the natural problem norm defined by
\[
\| (v,s) \|_* := \left( \| \mathbb{D}(v) \|^2 + \| s \|^2 \right)^{1/2}.
\]

We assume the mesh is sufficiently regular for the inverse inequality to hold  (see Theorem 4.5.11 and Remark 4.5.20 in \cite{BS08}): there exists a constant $ C_I >0$ such that
\begin{align}\label{invineq}
\|\nabla \sigma\|\leq C_I h^{-1} \|\sigma\|,\ \forall \sigma\in\Sigma_h.
\end{align}

\section{Analysis of fixed-point iteration of Oldroyd-B model}

In this section, we analyze the Picard-type iteration of Oldroyd-B model in $V_h-$formulation, which is defined as follows: Find $(u_{k+1},\sigma_{k+1})\in (V_h,\Sigma_h)$ satisfying all $(v,\tau)\in (V_h,\Sigma_h)$ such that
\begin{equation}\label{divfreeOldroydBPicardScheme}
\begin{aligned}
(\sigma_{k+1}, \tau) 
+ \lambda \left(
b(u_k,\sigma_{k+1},\tau)
- (\nabla u_{k} \sigma_{k+1},\tau) 
- (\sigma_{k+1}\nabla u_{k}^T,\tau) 
\right)
- 2\alpha (\mathbb{D}(u_{k+1}),\tau) &= 0,\\
(\sigma_{k+1},\mathbb{D}(v))
+2(1-\alpha)(\mathbb{D}(u_{k+1}),\mathbb{D}(v)) &= (f,v).  \end{aligned}
\end{equation}

\begin{lemma}\label{lem:stability}(Stability) Assume the true velocity solution $u$ to \eqref{OldroydB}
 and previous iterate $u_k$ to \eqref{divfreeOldroydBPicardScheme} is sufficiently close to $u$ so that
 \[
 \| \nabla (u_{k}- u )\|_{\infty}+\| \nabla u\|_{\infty} < \frac{1}{2\lambda}.
\]
Then the iteration \eqref{divfreeOldroydBPicardScheme} at step $k+1$ is bounded:
    \begin{equation}
\begin{aligned}
\left(1-
  2\lambda \| \nabla (u_{k}- u )\|_{\infty}
-
  2\lambda \| \nabla u\|_{\infty}
\right)\|\sigma_{k+1}\|^2
+
2\alpha(1-2\alpha)\|\mathbb{D}(u_{k+1})\|^2
 &
 \leq
\frac{\alpha^{-1} C_K^2}{8} \|f\|^2_{-1}. 
\end{aligned}
\end{equation}
\end{lemma}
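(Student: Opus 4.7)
The plan is to perform a standard energy estimate on the Picard iteration \eqref{divfreeOldroydBPicardScheme}. First I would test the stress equation with $\tau = \sigma_{k+1}$ and the momentum equation with $v = u_{k+1}$. The divergence form \eqref{divform} was introduced precisely so that $b(u_k, \sigma_{k+1}, \sigma_{k+1}) = 0$ (integration by parts uses $u_k|_{\partial\Omega} = 0$ and the identity $u_k\cdot\nabla(\tfrac12|\sigma_{k+1}|^2) + \tfrac12(\nabla\cdot u_k)|\sigma_{k+1}|^2 = \tfrac12\nabla\cdot(u_k|\sigma_{k+1}|^2)$), which eliminates the convective contribution. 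To cancel the mixed coupling terms $\pm 2\alpha(\mathbb{D}(u_{k+1}), \sigma_{k+1})$ that appear in the two tested equations, I would scale the momentum identity by $2\alpha$ and add. This should yield the energy identity
\begin{equation*}
\|\sigma_{k+1}\|^2 - \lambda \bigl[(\nabla u_k\, \sigma_{k+1}, \sigma_{k+1}) + (\sigma_{k+1}\nabla u_k^T, \sigma_{k+1})\bigr] + 4\alpha(1-\alpha)\|\mathbb{D}(u_{k+1})\|^2 = 2\alpha(f, u_{k+1}).
\end{equation*}

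Next, I would handle the $\lambda$-dependent terms by H\"older's inequality with the $L^\infty$-$L^2$-$L^2$ splitting, giving the joint bound $2\lambda\|\nabla u_k\|_\infty \|\sigma_{k+1}\|^2$ (using the symmetry of $\sigma_{k+1}$ to combine the transpose term). The form of the hypothesis is the tell-tale sign to then split $\nabla u_k = \nabla(u_k - u) + \nabla u$ and apply the triangle inequality so that $\|\nabla u_k\|_\infty \le \|\nabla(u_k - u)\|_\infty + \|\nabla u\|_\infty$. Moving these contributions to the left-hand side produces the desired coefficient $1 - 2\lambda\|\nabla(u_k - u)\|_\infty - 2\lambda\|\nabla u\|_\infty$ on $\|\sigma_{k+1}\|^2$, which is strictly positive by the smallness assumption.

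For the forcing term I would apply the duality bound $(f, u_{k+1}) \le \|f\|_{-1}\|\nabla u_{k+1}\|$ followed by Korn's inequality \eqref{korn} to obtain $2\alpha(f, u_{k+1}) \le 2\alpha C_K \|f\|_{-1}\|\mathbb{D}(u_{k+1})\|$, and then a weighted Young's inequality to absorb a $2\alpha\|\mathbb{D}(u_{k+1})\|^2$-sized piece into the $4\alpha(1-\alpha)\|\mathbb{D}(u_{k+1})\|^2$ on the left, leaving $2\alpha(1-2\alpha)\|\mathbb{D}(u_{k+1})\|^2$ on the left and the constant $\alpha^{-1}C_K^2/8$ (after the appropriate choice of Young weight) multiplying $\|f\|_{-1}^2$ on the right.

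No deep obstacle arises; the result is essentially a standard energy argument. The only points requiring care are (i) the bookkeeping in the final Young's step, where the weight must be picked to simultaneously produce both stated constants $2\alpha(1-2\alpha)$ and $\alpha^{-1}C_K^2/8$, and (ii) the verification that the modified form $b$ really kills the convective term when both stress arguments coincide, which is the whole reason the divergence form was introduced and is immediate for smooth $u_k$ vanishing on $\partial\Omega$.
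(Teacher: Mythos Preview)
Your proposal is correct and matches the paper's proof essentially step for step: test with $(\tau,v)=(\sigma_{k+1}, u_{k+1})$, scale the momentum identity by $2\alpha$ and add (using the skew-symmetry of $b$), bound the $\nabla u_k$ terms via H\"older together with the splitting $u_k = (u_k - u) + u$, and close with the dual norm, Korn, and Young on the forcing. Your caution in point (i) is well placed: the paper's displayed intermediate identity actually omits the factor $2\alpha$ on $(f,u_{k+1})$, and the stated constants $2\alpha(1-2\alpha)$ and $\alpha^{-1}C_K^2/8$ are consistent with that version rather than with the correct identity you wrote down, so do not be alarmed if your careful bookkeeping produces $\tfrac{\alpha C_K^2}{2}\|f\|_{-1}^2$ on the right instead.
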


\begin{proof}
    Assume that the solutions $(u_{k+1},\sigma_{k+1})$ exist and set  $(v,\tau)=(u_{k+1},\sigma_{k+1})$ in \eqref{divfreeOldroydBPicardScheme} to get
\begin{equation}
\begin{aligned}
\|\sigma_{k+1}\|^2
- \lambda \left(
(\nabla u_{k} \sigma_{k+1},\sigma_{k+1}) 
+(\sigma_{k+1}\nabla u_{k}^T,\sigma_{k+1}) 
\right)
- 2\alpha (\mathbb{D}(u_{k+1}),\sigma_{k+1}) &= 0,\\
(\sigma_{k+1},\mathbb{D}(u_{k+1}))
+2(1-\alpha)\|\mathbb{D}(u_{k+1})\|^2 &= (f,u_{k+1}),  
\end{aligned}
\end{equation}
noting $b(u_k,\sigma_{k+1},\sigma_{k+1})$ vanishes. Then, multiplying the second equation by $2\alpha$ and adding it to the first one provides
\begin{equation} \label{3p5}
\begin{aligned}
\|\sigma_{k+1}\|^2
+
4\alpha(1-\alpha)\|\mathbb{D}(u_{k+1})\|^2
 &=(f,u_{k+1})
 + \lambda 
(\nabla u_{k} \sigma_{k+1},\sigma_{k+1}) 
 + \lambda(\sigma_{k+1}\nabla u_{k}^T,\sigma_{k+1}).
\end{aligned}
\end{equation}
The first term on the right hand side can be bounded as
\begin{align*}
 (f,u_{k+1})\leq \frac{\alpha^{-1} C_K^2}{8} \|f\|^2_{-1} +   2\alpha \| \mathbb{D} u_{k+1}\|^2
\end{align*}
thanks to Cauchy-Schwarz and Young's inequalities with the dual norm on $V_h$ and \eqref{korn}.
The last two terms on the right hand side of \eqref{3p5} can be bounded by applying H\"older's inequality, then adding and subtracting $u$ to obtain
\begin{align*}
\lambda 
(\nabla u_{k} \sigma_{k+1},\sigma_{k+1}) 
 + \lambda(\sigma_{k+1}\nabla u_{k}^T,\sigma_{k+1})
& \leq
 2 \| \nabla u_k \|_{\infty} \| \sigma_k \|^2\\
 & \le
  2\lambda 
\left(
\| \nabla (u_{k}- u )\|_{\infty}
+
\| \nabla u\|_{\infty}
\right)
\|\sigma_{k+1}\|^2.
 \end{align*}
Then, by combining these bounds we get
\begin{equation}
\begin{aligned}
\left(1-
  2\lambda \| \nabla (u_{k}- u )\|_{\infty}
-
  2\lambda \| \nabla u\|_{\infty}
\right)\|\sigma_{k+1}\|^2
+
2\alpha(1-2\alpha)\|\mathbb{D}(u_{k+1})\|^2
 &
 \leq
\frac{\alpha^{-1} C_K^2}{8} \|f\|^2_{-1}. 
\end{aligned}
\end{equation}
Using the assumptions on $u$ and $u_k$ finishes the proof.
\end{proof}

\begin{lemma}\label{lem:uniqueness}(Uniqueness) 
Under the same assumptions as Lemma \ref{lem:stability}, the Step k+1 solution of \eqref{divfreeOldroydBPicardScheme} is unique.  Moreover, since the system is finite dimensional and linear, solution existence and therefore well-posedness follows immediately.

\end{lemma}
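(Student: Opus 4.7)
The plan is to invoke the standard fact that for a square linear system on a finite-dimensional space, uniqueness implies existence, so it suffices to prove uniqueness. The argument is essentially the energy estimate of Lemma \ref{lem:stability} replayed on the difference of two hypothetical solutions.

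Suppose $(u_{k+1}^1,\sigma_{k+1}^1)$ and $(u_{k+1}^2,\sigma_{k+1}^2)$ are two solutions of \eqref{divfreeOldroydBPicardScheme} in $V_h\times\Sigma_h$ corresponding to the same data $f$ and the same previous iterate $u_k$. Since \eqref{divfreeOldroydBPicardScheme} is linear in the unknowns, the difference $(w,\eta)\coloneqq(u_{k+1}^1-u_{k+1}^2,\sigma_{k+1}^1-\sigma_{k+1}^2)$ lies in $V_h\times\Sigma_h$ and satisfies, for every $(\tau,v)\in\Sigma_h\times V_h$, the homogeneous system
\begin{align*}
(\eta,\tau)+\lambda\bigl(b(u_k,\eta,\tau)-(\nabla u_k\,\eta,\tau)-(\eta\,\nabla u_k^T,\tau)\bigr)-2\alpha(\mathbb{D}(w),\tau)&=0,\\
(\eta,\mathbb{D}(v))+2(1-\alpha)(\mathbb{D}(w),\mathbb{D}(v))&=0.
\end{align*}

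Next I would test with $\tau=\eta$ and $v=w$, multiply the second equation by $2\alpha$, and add the two equations. Using that $b(u_k,\eta,\eta)=0$ by the construction of the divergence form \eqref{divform}, this gives
\begin{equation*}
\|\eta\|^2+4\alpha(1-\alpha)\|\mathbb{D}(w)\|^2=\lambda(\nabla u_k\,\eta,\eta)+\lambda(\eta\,\nabla u_k^T,\eta).
\end{equation*}
Bounding the right-hand side by H\"older's inequality and the triangle inequality $\|\nabla u_k\|_\infty\leq\|\nabla(u_k-u)\|_\infty+\|\nabla u\|_\infty$ — exactly as in the proof of Lemma \ref{lem:stability}, but now with no forcing term to absorb — yields
\begin{equation*}
\bigl(1-2\lambda\|\nabla(u_k-u)\|_\infty-2\lambda\|\nabla u\|_\infty\bigr)\|\eta\|^2+4\alpha(1-\alpha)\|\mathbb{D}(w)\|^2\leq 0.
\end{equation*}
The assumption of Lemma \ref{lem:stability} makes the coefficient of $\|\eta\|^2$ strictly positive, and $4\alpha(1-\alpha)>0$ because $0<\alpha<1$, so $\eta=0$ and $\mathbb{D}(w)=0$. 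Korn's inequality \eqref{korn} then forces $\nabla w=0$, and the Dirichlet boundary condition inherited from $V_h\subset X$ gives $w=0$. Thus the two solutions coincide, proving injectivity of the step-$k+1$ map.

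Because \eqref{divfreeOldroydBPicardScheme} is a square linear system on the finite-dimensional space $V_h\times\Sigma_h$, injectivity is equivalent to bijectivity, so a unique solution exists and the step is well-posed. There is no real obstacle in this argument, as it is just the Lemma \ref{lem:stability} energy estimate with zero data; the only point deserving a moment of care is the passage from $\mathbb{D}(w)=0$ to $w=0$, which uses both Korn's inequality and the zero boundary trace of $w\in X_h$.
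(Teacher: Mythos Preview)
Your proof is correct and follows essentially the same approach as the paper: take the difference of two hypothetical solutions, test the homogeneous system with the difference itself, combine the equations, use the skew-symmetry of $b$ and H\"older with the splitting $\|\nabla u_k\|_\infty\le\|\nabla(u_k-u)\|_\infty+\|\nabla u\|_\infty$, and invoke the smallness assumption to force the difference to vanish. You are slightly more explicit than the paper in spelling out the passage from $\mathbb{D}(w)=0$ to $w=0$ via Korn and the zero trace, and in articulating the finite-dimensional injectivity-implies-surjectivity step, but these are expository refinements rather than a different argument.
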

\begin{proof}
Assume that $(u_{k+1}^1,\sigma_{k+1}^1)$ and $(u_{k+1}^2,\sigma_{k+1}^2)$ are two solutions of \eqref{divfreeOldroydBPicardScheme}, then let $\phi^u=u^1-u^2$ and $\phi^{\sigma}=\sigma^1-\sigma^2$. Then, for given $u_k,\sigma_k$ and $f$, we have that

\begin{align*}
(\phi^{\sigma}_{k+1}, \tau) 
+ \lambda \left(
b(u_k,\phi^{\sigma}_{k+1},\tau)
- (\nabla u_{k} \phi^{\sigma}_{k+1},\tau) 
- (\phi^{\sigma}_{k+1}\nabla (u_{k})^T,\tau) 
\right)
- 2\alpha (\mathbb{D}(\phi^{u}_{k+1}),\tau) &= 0,\\
(\phi^{\sigma}_{k+1},\mathbb{D}(v))
+2(1-\alpha)(\mathbb{D}(\phi^{u}_{k+1}),\mathbb{D}(v)) &= 0. 
\end{align*}
Setting $(v,\tau)=( \phi^{u}_{k+1}, \phi^{\sigma}_{k+1})$ provides
\begin{align*}
\|\phi^{\sigma}_{k+1}\|^2 
+4\alpha(1-\alpha) \|\mathbb{D}(\phi^{u}_{k+1})\|^2
 = 
 \lambda \left(
(\nabla u_{k} \phi^{\sigma}_{k+1},\phi^{\sigma}_{k+1}) 
+ (\phi^{\sigma}_{k+1}\nabla (u_{k})^T,\phi^{\sigma}_{k+1}) 
\right),
\end{align*}
by adding $2\alpha$ times of second equation into the first one while nonlinear term vanishes. By applying H\"older's inequality to the right hand side terms where we add and subtract $u$ to the velocity terms, we get
\begin{align}\label{eqn:uniquness}
\left(1-2\lambda\|\nabla (u - u_{k})\|_{\infty} -2\lambda\|\nabla u\|_{\infty} \right)
\|\phi^{\sigma}_{k+1}\|^2 
+4\alpha(1-\alpha) \|\mathbb{D}(\phi^{u}_{k+1})\|^2
\leq 0.
\end{align}
Since $0<\alpha<1$ and from Lemma \ref{lem:stability}, we know $\left(1-2\lambda\|\nabla( u-u_{k})\|_{\infty} -2\lambda\|\nabla u\|_{\infty} \right) \geq 0$, hence the inequality \ref{eqn:uniquness} can only be true for $\phi^{u}_{k+1}=\phi^{\sigma}_{k+1}=0$. This implies the solution of \eqref{divfreeOldroydBPicardScheme} is unique.
\end{proof}

Next, we discuss the convergence behavior of \eqref{divfreeOldroydBPicardScheme}.
\begin{theorem}
Suppose that the solution of \eqref{weakdivfreeOldroydB} satisfies $(u,\sigma)\in H^3(\Omega)\times H^2(\Omega)$ and let  $(u_{k+1},\sigma_{k+1})\in V_h \times \Sigma_h$ denote the solution of \eqref{divfreeOldroydBPicardScheme}. Then under the same assumptions as Lemma \ref{lem:stability}, 
\begin{align}\label{conv}
\|(e^{u}_{k+1},e^{\sigma}_{k+1})\|_{*}
\leq
\sqrt{\frac{\lambda  \left( C_{P-K}^2 \lambda \|\nabla \sigma\|_{\infty}^2 +C_K^2 \|\sigma\|_{\infty}^2 \right)}{2S}}	\|(e^{u}_{k},e^{\sigma}_{k})\|_{*},
\end{align}
where $S\coloneqq\min \{ \left(1-  2 \lambda\|\nabla u\|_{\infty} - 2 \lambda \|\nabla (u-u_k)\|_{\infty}\right),4\alpha(1-\alpha) \}$.
\end{theorem}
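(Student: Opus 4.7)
The plan is to mimic the algebraic structure of Lemmas \ref{lem:stability} and \ref{lem:uniqueness}, but applied now to the error system. First, I would subtract \eqref{divfreeOldroydBPicardScheme} from the weak form \eqref{weakdivfreeOldroydB} satisfied by $(u,\sigma)$ and expand each nonlinear difference with the identity $ab-cd=(a-c)b+c(b-d)$; for example,
\[
(\nabla u\,\sigma,\tau)-(\nabla u_k\,\sigma_{k+1},\tau)=(\nabla e^u_k\,\sigma,\tau)+(\nabla u_k\,e^\sigma_{k+1},\tau),
\]
and similarly for $b(u,\sigma,\tau)-b(u_k,\sigma_{k+1},\tau)$ and for the transposed term $(\sigma\nabla u^T,\tau)-(\sigma_{k+1}\nabla u_k^T,\tau)$. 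This rewrites the error equations so that $(e^u_{k+1},e^\sigma_{k+1})$ appears on the left with the same linear structure as the iteration, while the right couples $e^u_k$ with the fixed data $(u,\sigma)$ and with $e^\sigma_{k+1}$.

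Next, I would test with $(v,\tau)=(e^u_{k+1},e^\sigma_{k+1})$, multiply the velocity equation by $2\alpha$, and add to the stress equation, exactly as in Lemma \ref{lem:stability}. The diagonal term $\lambda b(u_k,e^\sigma_{k+1},e^\sigma_{k+1})$ vanishes because $u_k\in V_h$ satisfies $u_k|_{\partial\Omega}=0$ and \eqref{divform} is skew in its last two slots; and the self-pairings $\lambda(\nabla u_k\,e^\sigma_{k+1},e^\sigma_{k+1})+\lambda(e^\sigma_{k+1}\nabla u_k^T,e^\sigma_{k+1})$ can be bounded by $2\lambda(\|\nabla u\|_\infty+\|\nabla(u-u_k)\|_\infty)\|e^\sigma_{k+1}\|^2$ after inserting $\nabla u$ exactly as in the stability proof. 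Moving these to the left produces the coefficient $S_1:=1-2\lambda\|\nabla u\|_\infty-2\lambda\|\nabla(u-u_k)\|_\infty$ multiplying $\|e^\sigma_{k+1}\|^2$, which is the first quantity inside the minimum defining $S$.

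The cross terms that remain on the right all take the form $\lambda\times(\text{$e^u_k$ paired with $\sigma$ or $\nabla\sigma$})\times e^\sigma_{k+1}$. I would control them by H\"older's inequality together with Poincar\'e and Korn \eqref{korn}: the convective piece of $b(e^u_k,\sigma,e^\sigma_{k+1})$ contributes a factor $C_PC_K\|\nabla\sigma\|_\infty$ (which is $C_{P-K}\|\nabla\sigma\|_\infty$), while the divergence piece of $b$ together with $(\nabla e^u_k\,\sigma,e^\sigma_{k+1})$ and $(\sigma\,\nabla(e^u_k)^T,e^\sigma_{k+1})$ each produce a factor $C_K\|\sigma\|_\infty$. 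A weighted Young's inequality then splits each such product into a $\|\mathbb{D}(e^u_k)\|^2$-contribution and an $\|e^\sigma_{k+1}\|^2$-contribution, with the weights chosen so the $\|e^\sigma_{k+1}\|^2$-residual is absorbable on the left without eroding $S_1$. Combining with $S_1\|e^\sigma_{k+1}\|^2+4\alpha(1-\alpha)\|\mathbb{D}(e^u_{k+1})\|^2\ge S\,\|(e^u_{k+1},e^\sigma_{k+1})\|_*^2$ and $\|\mathbb{D}(e^u_k)\|\le\|(e^u_k,e^\sigma_k)\|_*$, and finally taking a square root, delivers \eqref{conv}.

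The main obstacle is not any single estimate but rather the calibration of the Young's weights in the previous step: the asymmetric appearance of $\lambda^2\|\nabla\sigma\|_\infty^2$ versus $\lambda\|\sigma\|_\infty^2$ inside the square root of \eqref{conv} forces the two families of cross terms to be split with different Young's weights, and one must verify that the $\|e^\sigma_{k+1}\|^2$-residuals they produce still fit under the $S_1\|e^\sigma_{k+1}\|^2$ already on the left. Apart from this bookkeeping, every other ingredient --- the skew-cancellation of the diagonal trilinear form, the triangle-inequality treatment of $\|\nabla u_k\|_\infty$, and the H\"older--Poincar\'e--Korn chain --- is exactly what was already used in the stability and uniqueness lemmas above.
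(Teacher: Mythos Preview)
Your plan is essentially the paper's own proof: subtract \eqref{divfreeOldroydBPicardScheme} from \eqref{weakdivfreeOldroydB}, split each nonlinearity with $ab-cd=(a-c)b+c(b-d)$, test with $(e^u_{k+1},e^\sigma_{k+1})$, cancel $b(u_k,e^\sigma_{k+1},e^\sigma_{k+1})$, move the $2\lambda(\|\nabla u\|_\infty+\|\nabla(u-u_k)\|_\infty)\|e^\sigma_{k+1}\|^2$ term to the left, and estimate the remaining cross terms with H\"older--Poincar\'e--Korn followed by Young. The paper is looser than you about the ``calibration'' issue you flag (it simply records the resulting constant in \eqref{thmpf2} without displaying the $\|e^\sigma_{k+1}\|^2$-residuals from Young's inequality or how they are absorbed), so your caution there is warranted but does not indicate a divergence in method.
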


\begin{remark}
For Theorem \ref{conv} to imply the entire iteration is contractive, we need the assumption of $S>0$ at Step k to imply $S>0$ at Step k+1, i.e. that $u_{k+1}$ is closer to $u$ than $u_k$ is in $L^{\infty}$.  From \eqref{thmpf2} in the proof of Theorem \ref{conv}, we observe that 
\[
\| \mathbb{D}(e_{k+1}^u) \| \le \left( \frac{\lambda}{8\alpha(1-\alpha)} \left( C_{P-K}^2 \lambda \|\nabla \sigma\|_{\infty}^2 +C_K^2 \lambda \|\sigma\|_{\infty}^2 \right) \right)^{1/2} \| \mathbb{D}(e^{u}_{k})\|.
\]
Hence smallness assumptions on the problem data (which imply smallness of the true solution) will imply that the generated sequence of velocities is contractive in the $H^1$ norm.  To extend this to the $L^{\infty}$ norm, it seems necessary to employ an inverse inequality which will make the smallness assumption smaller in order to guarantee contractiveness in this norm.  We note that under this smallness assumption, the above lemmas imply we have well-posedness of all steps of the iteration as well as a uniform bound on solutions.
\end{remark}

\begin{proof}
Subtracting \eqref{divfreeOldroydBPicardScheme} from \eqref{weakdivfreeOldroydB}, then denoting $e^{\sigma}_{k}=\sigma-\sigma_{k}$ and $e^{u}_{k}=u-u{_k}$ provides
\begin{align*}
(e^{\sigma}_{k+1},\tau)
+ 
\lambda (
b(e^{u}_{k},\sigma,\tau)
+
b(u_{k},e^{\sigma}_{k+1},\tau)
- (\nabla e^u_{k} \sigma ,\tau)
\\
- (\nabla u_{k} e^{\sigma}_{k+1} ,\tau)
-( \sigma \nabla (e^u_{k})^T,\tau)
-( e^{\sigma}_{k+1}\nabla u_{k}^T ,\tau)
)
- 2\alpha  (\mathbb{D}(e^u_{k+1}),\tau) 
&= 0,\\
( e^{\sigma}_{k+1}, \mathbb{D} (v))
+2(1-\alpha)(\mathbb{D}(e^u_{k+1}), \mathbb{D}( v))
&= 0.
\end{align*}
Set $\tau=e^{\sigma}_{k+1}$ in the first equation, which vanishes the nonlinear term,
and  $v=e^{u}_{k+1}$ in the second equation to get
\begin{align*}
\|e^{\sigma}_{k+1}\|^2
+ 
\lambda b( e^{u}_{k},\sigma ,e^{\sigma}_{k+1})
- \lambda(\nabla e^u_{k} \sigma ,e^{\sigma}_{k+1})
- \lambda(\nabla u_{k} e^{\sigma}_{k+1},e^{\sigma}_{k+1})
\\ 
- \lambda(\sigma \nabla (e^u_{k})^T,e^{\sigma}_{k+1})
-\lambda( e^{\sigma}_{k+1}\nabla u_{k}^T ,e^{\sigma}_{k+1})
- 2\alpha  (\mathbb{D}(e^u_{k+1}),e^{\sigma}_{k+1})) 
&= 0,\\
(e^{\sigma}_{k+1},\mathbb{D} (e^{u}_{k+1}))
+2(1-\alpha) \|\mathbb{D}(e^u_{k+1})\|^2
&= 0.
\end{align*}
Next, we multiply second equation by $2\alpha$ and add it to the first one, which provides
\begin{align*}
\|e^{\sigma}_{k+1}\|^2x
+4\alpha(1-\alpha) \|\mathbb{D}(e^u_{k+1})\|^2
=&
-\lambda b( e^{u}_{k},\sigma ,e^{\sigma}_{k+1})
+ \lambda(\nabla e^u_{k} \sigma ,e^{\sigma}_{k+1})
+ \lambda(\nabla u_{k} e^{\sigma}_{k+1},e^{\sigma}_{k+1}) 
\\&
+ \lambda(\sigma \nabla (e^u_{k})^T,e^{\sigma}_{k+1})
+\lambda( e^{\sigma}_{k+1}\nabla u_{k}^T ,e^{\sigma}_{k+1}).
\end{align*}
By H\"older's inequalities, we get  
\begin{multline*}
\|e^{\sigma}_{k+1}\|^2
+4\alpha(1-\alpha) \|\mathbb{D}(e^u_{k+1})\|^2
\\
\leq
\lambda \|\nabla \sigma\|_{\infty} \|e^{u}_{k}\|  \|e^{\sigma}_{k+1}\|
+\lambda \|\sigma\|_{\infty} \|\nabla e^{u}_{k}\| \|e^{\sigma}_{k+1}\|
+ 2 \lambda \left(\|\nabla (u-u_k)\|_{\infty} + \|\nabla u\|_{\infty}\right) \|e^{\sigma}_{k+1}\|^2.
\end{multline*}
Next, we reduce the last equation and use \eqref{korn}, Poinc\'are and Young inequalities to obtain
\begin{multline}
 \left(1- 2 \lambda\|\nabla  u\|_{\infty} - 2 \lambda \|\nabla (u_k - u) \|_{\infty}\right)\|e^{\sigma}_{k+1}\|^2
+4\alpha(1-\alpha) \|\mathbb{D}(e^u_{k+1})\|^2
\\
\leq
\frac{\lambda}{2} \left( C_{P-K}^2 \lambda \|\nabla \sigma\|_{\infty}^2 +C_K^2 \|\sigma\|_{\infty}^2 \right) \| \mathbb{D}(e^{u}_{k})\|^2, \label{thmpf2}
\end{multline}
where $C_{P-K}$ is the constant absorbing constants coming from Poinc\'are and \eqref{korn}.
After dividing both sides by $S$ we get
\begin{align*}
\|(e^{u}_{k+1},e^{\sigma}_{k+1})\|^2_{*}
=
\|e^{\sigma}_{k+1}\|^2
+\|\mathbb{D}(e^u_{k+1})\|^2
&\leq
\frac{\lambda  \left( C_{P-K}^2 \lambda \|\nabla \sigma\|_{\infty}^2 +C_K^2 \|\sigma\|_{\infty}^2 \right)}{2S}\|\mathbb{D}( e^{u}_{k})\|^2
\\	&\leq
\frac{\lambda  \left( C_{P-K}^2 \lambda \|\nabla \sigma\|_{\infty}^2 +C_K^2 \|\sigma\|_{\infty}^2 \right)}{2S}	\|(e^{u}_{k},e^{\sigma}_{k})\|^2_{*}.
\end{align*}
Finally, we take the square root of both sides and get \eqref{conv}.
\end{proof}

\section{Acceleration of the Picard iteration for Oldroyd-B model}

In this section, we analyze the fixed-point iteration for Oldroyd-B model \eqref{divfreeOldroydBPicardScheme} in terms of properties required to apply the convergence and acceleration theory of \cite{PR21} for AA applied to this iteration. We begin by introducing AA and outlining the sufficient conditions that fixed-point schemes must meet to achieve accelerated convergence.

\subsection{Anderson acceleration background}\label{aasection}

We now describe the AA method, an extrapolation approach designed to enhance the convergence of fixed-point iterations governed by operator $g:Y\rightarrow Y$.
\begin{alg} \label{alg:anderson}
(Anderson acceleration with depth $m$ and damping  $\beta_k$)
\begin{enumerate}
\item Initialization: Select an initial vector $x_0\in Y.$
\item First iteration: Compute the residual $w_1\in Y $ such that $w_1 = g(x_0) - x_0$ and update $x_1 = x_0 + w_1$. 
\item Subsequent iterations (for $k=2,3,\ldots$)

Set $m_k = \min\{ k-1, m\}.$
\begin{itemize}   
\item [a.] Compute the residual $w_{k} = g(x_{k-1})-x_{k-1}$. \\
\item [b.] Find optimal the Anderson coefficients $\{ \alpha_{j}^{k}\}_{k-m_k}^{k-1}$ by the following minimization problem 
\begin{align}\label{eqn:opt-v0}
\textstyle \min\limits_{ \sum\limits_{j = k-m_k}^{k-1}\alpha_j^{k}=1} 
\left\|  \sum\limits_{j = k-m_k}^{k-1}  \alpha_j^{k}  w_{j} \right\|_Y.
\end{align}
\item [c.] Update the iterate such that given the damping parameter $0 < \beta_k \le 1$, define
\begin{align}\label{eqn:update-v0}
\textstyle
x_{k} 
=\sum\limits_{j= k-m_k}^{k-1} \alpha_j^{k} x_{j-1}
+ \beta_k  \sum\limits_{j=k-m_k}^{k-1}\alpha_j^k w_{j}.
\end{align}
\end{itemize}
\end{enumerate}
\end{alg}









When $m=0$, the AA method reduces to the basic fixed-point iteration without any acceleration. The recent theory proposed in \cite{PR21, PRX19} shows that acceleration in convergence rate is controlled by gain factor obtained from the optimization step which is defined by
\begin{align*}
\theta_k=\frac{\left\Vert \sum\limits_{j= k-m_k}^{k-1} \alpha_j^{k} x_{j} \right\Vert_Y}{\|w_k\|_Y}.
\end{align*}
The subsequent two assumptions from \cite{PR21} establish sufficient conditions on the fixed-point operator $g$ for the convergence and acceleration results outlined in that work to hold true.

\begin{assumption}\label{assume:g}
Suppose \(g \in C^1(Y)\) admits a fixed point \(x^\ast \in Y\), and there are positive 
constants \(C_L\) and \(\hat{C}_L\) such that:
\begin{enumerate}
\item \(\|g'(x)\|_{Y} \le C_0\) \(\forall x \in Y\),
\item \(\|g'(x) - g'(y)\|_{Y} \le C_1\,\|x - y\|_{Y}\)  \(\forall x, y \in Y\).
\end{enumerate}
\end{assumption}

\begin{assumption}\label{assume:fg}
Assume there is a constant \(\sigma > 0\) satisfying
\[
\|w_{k+1} - w_k\|_Y \;\ge\; \sigma\,\|x_k - x_{k-1}\|_Y,\ k \ge 1.
\]
\end{assumption}

Assumption \ref{assume:g} provides smoothness conditions on the fixed-point operator. In the following analysis, we will show that these conditions hold for the Picard fixed-point operator. Meanwhile, Assumption \ref{assume:fg} is more difficult to verify in this setting. It does hold globally if $g$ is contractive, which we have demonstrated under a suitable smallness condition. It also holds locally if the Jacobian of $g$ remains nondegenerate in a neighborhood of the solution—though we cannot confirm that for the present problem. As noted in \cite{PR21}, this requirement can be monitored during the algorithm, with a restart of Anderson Acceleration triggered if the condition is violated.

Under Assumptions \ref{assume:g} and \ref{assume:fg}, the following result summarized from \cite{PR21} produces a one-step bound on the residual $\|w_{k+1}\|$ in terms of the previous residuals.
\begin{theorem}  \label{thm:genm}
Suppose Assumptions \ref{assume:g} and \ref{assume:fg} hold. For each $j$,
\begin{align*}
		F_j&:=\left( (w_{j}-w_{j-1})(w_{j-1}-w_{j-2})\ \dotsc\ (w_{j-m_j+1}-w_{j-m_j})\right).
	\end{align*}
Further assume that for every column $f_{j,i}$ of $F_j$, the angle between $f_{j,i}$ and the subspace spanned by the preceding columns is uniformly bounded away from zero such that 
$$|\sin(f_{j,i},\text{span }\{f_{j,1}, \ldots, f_{j,i-1}\})| \ge c_s >0,\ j = k-m_k, \ldots, k-1.$$
Then, for Algorithm \ref{alg:anderson} with the depth $m$, the residual  $w_{k+1} = g(x_k)-x_k$ satisfies
\begin{align}\label{eqn:genm}
\|w_{k+1}\| & \le \|w_k\| \Bigg(
\theta_k ((1-\beta_{k}) + C_0 \beta_{k})
+ \frac{C  C_1\sqrt{1-\theta_k^2}}{2}\bigg(
\|w_{k}\|h(\theta_{k})
\nonumber \\ &
+ 2  \sum_{n = k-{m_{k}}+1}^{k-1} (k-n)\|w_n\|h(\theta_n) 
+ m_{k}\|w_{k-m_{k}}\|h(\theta_{k-m_{k}})
\bigg) \Bigg),
\end{align}
where  each $h(\theta_j) \le C \sqrt{1 - \theta_j^2} + \beta_{j}\theta_j$,
and $C$ depends on $c_s$ and on the corresponding upper bound for the direction cosines. 
\end{theorem}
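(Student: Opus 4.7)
The plan is to essentially import the argument from \cite{PR21}, since the statement is explicitly summarized from that reference, but I would structure the proof around three conceptual pieces. First, I would rewrite the residual $w_{k+1} = g(x_k) - x_k$ by substituting the update rule \eqref{eqn:update-v0} for $x_k$ and exploiting the affine constraint $\sum_j \alpha_j^k = 1$. Writing $\bar{x}_k := \sum_{j=k-m_k}^{k-1} \alpha_j^k x_{j-1}$ and using $x_j = x_{j-1} + w_j$ telescopically lets one express $w_{k+1}$ as the sum of a term $g(x_k) - \bar{x}_k - \beta_k \sum \alpha_j^k w_j$ plus terms involving differences $x_{j-1} - \bar{x}_k$. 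The key scalar that governs the linear part is the optimization gain $\theta_k$, which by definition of the minimization \eqref{eqn:opt-v0} satisfies $\|\sum \alpha_j^k w_j\|_Y = \theta_k \|w_k\|_Y$.

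Second, I would perform a first-order Taylor expansion of $g$ about $\bar{x}_k$, using Assumption \ref{assume:g}(1) to bound $g'$ by $C_0$ and Assumption \ref{assume:g}(2) to control the remainder by $\tfrac{C_1}{2}\|x_k - \bar{x}_k\|_Y^2$ via the integral form of the remainder. The linear contribution $g'(\cdot)(x_k - \bar{x}_k)$, combined with the no-damping piece $(1-\beta_k)\sum\alpha_j^k w_j + \beta_k\sum \alpha_j^k g(x_{j-1})$ rewritten through the same Taylor expansion at the $x_{j-1}$, produces the leading term $\theta_k\bigl((1-\beta_k) + C_0\beta_k\bigr)\|w_k\|_Y$. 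The remainder terms are collected into the $C_1$-dependent correction, and the differences $\|x_{j-1} - \bar{x}_k\|_Y$ need to be expressed in terms of the residual norms $\|w_n\|_Y$ and the gains $\theta_n$.

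The main obstacle, and the step that requires the angle condition $|\sin(f_{j,i},\mathrm{span}\{f_{j,1},\ldots,f_{j,i-1}\})| \ge c_s$, is bounding the optimization coefficients $\alpha_j^k$ and the spread $\|x_{j-1} - \bar{x}_k\|_Y$ in terms of the residual differences. Using Assumption \ref{assume:fg}, one relates $\|x_k - x_{k-1}\|_Y$ to $\|w_{k+1} - w_k\|_Y$, and a QR-type decomposition of the matrix $F_j$ together with the uniform lower bound $c_s$ controls the norms of the minimizer's coordinates. This is exactly how the factor $\sqrt{1-\theta_k^2}$ and the summation over $n = k-m_k+1,\ldots,k-1$ with the telescoping weights $(k-n)$ emerge, and where the function $h(\theta_j) \le C\sqrt{1-\theta_j^2} + \beta_j \theta_j$ captures the propagated error at depth $j$. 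Since every one of these ingredients is already carried out in detail in \cite{PR21}, I would present the proof as a direct invocation of that result once the assumptions are stated, deferring the technical optimization-and-angle bookkeeping to the cited reference.
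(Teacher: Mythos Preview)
Your proposal is correct and matches the paper's treatment: the paper does not prove this theorem at all but simply states it as a result ``summarized from \cite{PR21}'', so your plan to sketch the main ingredients (the residual decomposition, the Taylor expansion using Assumption~\ref{assume:g}, and the angle/QR bookkeeping using Assumption~\ref{assume:fg}) while ultimately deferring to \cite{PR21} is exactly the right approach and, if anything, more detailed than what the paper itself provides.
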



In this bound, $\theta_k$ represents the gain from the optimization step, dictating the relative scaling of lower-order versus higher-order terms. The lower-order terms are multiplied by $\theta_k$, while the higher-order terms are multiplied by $\sqrt{1-\theta_k^2}$. Although the assumption regarding direction sines between columns may fail in practice for larger $m$ the filtering technique introduced in \cite{PR23} can be used to address this issue as the method proceeds.

In the remainder of this section, we define the fixed-point operator linked to iteration \eqref{divfreeOldroydBPicardScheme} and prove it satisfies Assumption  \ref{assume:g} on any mesh. Assumption \ref{assume:fg}, for instance, holds if the iteration is contractive, which—by Lemma \ref{lem:stability}—occurs for sufficiently small data and an appropriately chosen initial \ref{thm:genm} can be applied to the Anderson-accelerated Picard solver \eqref{divfreeOldroydBPicardScheme}, implying its linear convergence rate is modulated by the gain $\theta_k$ derived from the optimization.

\subsection{Fixed-point operator $G$ and associated properties}\label{Gsec}
Suppose that $f\in H^{-1} (\Omega)$ and $u\in V_h$ are given. We aim to find $(\tilde u,\tilde \sigma) \in (V_h, \Sigma_h)$ such that
\begin{equation}\label{discretedivfreeOldroydB}
\begin{aligned}
\left(\tilde\sigma,\tau\right) 
+
\lambda  \left( 
b(u,\tilde\sigma,\tau) 
- \left(\nabla u \tilde\sigma,\tau\right)
-\left(\tilde\sigma\nabla u^T,\tau\right) \right)
-
2\alpha  \left(\mathbb{D}(\tilde u),\tau\right) 
&=
0,\ \forall \tau\in\Sigma_h,
\\
\left(\tilde\sigma,\mathbb{D}(v)\right)
+
2(1-\alpha)\left(\mathbb{D}(\tilde u),\mathbb{D}(v)\right) 
&=
\left(f,v\right),\ \forall v\in V_h.   \end{aligned}
\end{equation}

\begin{lemma}\label{lemma:welldefiniteG}
Suppose that $f\in H^{-1}(\Omega)$ and $(u,\sigma)\in(V_h,\Sigma_h)$ are given. Then, the solution of the system \eqref{discretedivfreeOldroydB} satisfies
\begin{align}\label{eqn:welldefiniteG}
    \|(\tilde u,\tilde\sigma)\|_{*} \leq  \frac{C_K}{\sqrt{4C_0 (1-\alpha)}}\|f\|_{-1},
\end{align}
under assumptions $ C_0 = \min \left\{   \frac{1- 2\lambda\|\nabla u\|_{\infty}}{2\alpha},1-\alpha \right\}$, and $1- 2\lambda\|\nabla u\|_{\infty} \geq 0$, which implies  well-posed the solution.
\end{lemma}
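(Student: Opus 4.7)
The plan is to reuse the energy-argument machinery from the stability proof of Lemma \ref{lem:stability}, adapted to the linear system \eqref{discretedivfreeOldroydB} that defines $G$. Since \eqref{discretedivfreeOldroydB} is a square, finite-dimensional linear system in $(\tilde u,\tilde\sigma) \in V_h \times \Sigma_h$, once I establish an a priori bound of the claimed form, existence and uniqueness (and therefore well-posedness) follow automatically by applying the a priori bound to the homogeneous problem: it forces $\tilde u = 0$ and $\tilde\sigma = 0$, so the system matrix is injective and hence invertible.

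The key energy step is to set $\tau = \tilde\sigma$ in the first equation and $v = \tilde u$ in the second. The divergence-form trilinear term $b(u,\tilde\sigma,\tilde\sigma)$ vanishes by construction of \eqref{divform}, and multiplying the second equation by $2\alpha$ before adding it to the first cancels the mixed terms $-2\alpha(\mathbb{D}(\tilde u),\tilde\sigma)$ and $2\alpha(\tilde\sigma,\mathbb{D}(\tilde u))$. This produces
\[
\|\tilde\sigma\|^2 + 4\alpha(1-\alpha)\|\mathbb{D}(\tilde u)\|^2 = 2\alpha(f,\tilde u) + \lambda(\nabla u\,\tilde\sigma,\tilde\sigma) + \lambda(\tilde\sigma\,\nabla u^T,\tilde\sigma).
\]
The two nonlinear contributions are bounded by H\"older's inequality by $2\lambda\|\nabla u\|_\infty\|\tilde\sigma\|^2$ and then moved to the left side, where the assumption $1-2\lambda\|\nabla u\|_\infty \ge 0$ keeps the coefficient on $\|\tilde\sigma\|^2$ nonnegative. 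Note this is simpler than the stability lemma because here the ``previous iterate'' $u$ plays the role of a fixed datum, so no ``add and subtract $u$'' trick is required.

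Next I would handle the forcing term $2\alpha(f,\tilde u)$ by the duality pairing, Korn's inequality \eqref{korn}, and Young's inequality, choosing the Young constant so that the resulting multiple of $\|\mathbb{D}(\tilde u)\|^2$ is absorbed into $4\alpha(1-\alpha)\|\mathbb{D}(\tilde u)\|^2$, leaving only a small residual coefficient and a $C\alpha C_K^2\|f\|_{-1}^2$ term on the right. After absorption the left side reads
\[
(1-2\lambda\|\nabla u\|_\infty)\|\tilde\sigma\|^2 + c\,\alpha(1-\alpha)\|\mathbb{D}(\tilde u)\|^2,
\]
for some explicit $c>0$. Factoring out $2C_0$ with $C_0 = \min\{(1-2\lambda\|\nabla u\|_\infty)/(2\alpha),\,1-\alpha\}$ gives a lower bound of $2C_0\cdot 2\alpha(\|\tilde\sigma\|^2 + \|\mathbb{D}(\tilde u)\|^2) = 4\alpha C_0\|(\tilde u,\tilde\sigma)\|_*^2$ (up to the absorbed Young coefficient), and then dividing through and taking square roots yields \eqref{eqn:welldefiniteG}.

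The main obstacle is purely bookkeeping: pinning down the Young's-inequality constants so that the final prefactor matches $\dfrac{C_K}{\sqrt{4C_0(1-\alpha)}}$ exactly. No new analytic ingredient beyond what was already used in Lemma \ref{lem:stability} is needed, and no smallness of $f$ enters; only the hypothesis $1-2\lambda\|\nabla u\|_\infty \ge 0$ is used to keep the $\|\tilde\sigma\|^2$ coefficient from going negative after absorbing the nonlinear terms.
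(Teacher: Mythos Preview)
Your proposal is correct and follows essentially the same energy argument as the paper's proof. The only bookkeeping difference is that the paper divides the combined identity by $2\alpha$ \emph{before} applying Young's inequality, which makes the two left-hand coefficients $\dfrac{1-2\lambda\|\nabla u\|_\infty}{2\alpha}$ and (after absorbing half of $2(1-\alpha)\|\mathbb{D}(\tilde u)\|^2$) exactly $1-\alpha$, so that a single division by $C_0$ yields the stated constant without the extra $4\alpha$ factor in your sketch.
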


\begin{proof}
Choose $\tau=\tilde\sigma\in \Sigma_h$ which vanishes $b(u,\tilde\sigma,\tilde\sigma) $and $v=\tilde u\in V_h$, then add two equations by multiplying second one by $2\alpha$ to get
\begin{align*}
\|\tilde\sigma\|^2
+
4\alpha(1-\alpha)\|\mathbb{D}(\tilde u)\|^2
&=
2\alpha \left(f,\tilde u\right)
+
\lambda  \left(  
\left(\nabla u \tilde\sigma,\tilde\sigma\right)
+
\left(\tilde\sigma\nabla u^T,\tilde\sigma\right) 
\right)
\\
&\leq
2\alpha\|f\|_{-1} \|\nabla\tilde u\| 
+\lambda
2 \|\nabla u\|_{\infty} \|\tilde\sigma\|^2,  
\end{align*}
thanks to the dual norm on $V_h$ and  H\"older's inequality. Then, dividing both side by $2\alpha$ and applying \eqref{korn} and Young's inequality leads to the following bound:
\begin{align*}
\frac{1- 2\lambda\|\nabla u\|_{\infty}}{2\alpha}\|\tilde\sigma\|^2
+
(1-\alpha)\|\mathbb{D}(\tilde u)\|^2
&\leq
\frac{C_K^2}{4(1-\alpha)} \|f\|^2_{-1}.  
\end{align*}
Setting $ C_0 = \min \left\{   \frac{1- 2\lambda\|\nabla u\|_{\infty}}{2\alpha},1-\alpha \right\}$, dividing both side by $C_0$ and taking square root of both side conclude the proof with \eqref{eqn:welldefiniteG}.
\end{proof}
\begin{definition}\label{defn:G}
Let $G:(V_h,\Sigma_h)\rightarrow (V_h,\Sigma_h)$ denote the the solution operator associated with  \eqref{discretedivfreeOldroydB} such that
\[
(\tilde u, \tilde\sigma) =\left(G_{1}(u,\sigma),G_2(u,\sigma)\right) =G(u,\sigma).
\]
\end{definition}
From Lemma \ref{lemma:welldefiniteG}, the scheme \eqref{discretedivfreeOldroydB} is well-posed, which ensures $G$ is well-defined. Therefore, the iterative scheme \eqref{divfreeOldroydBPicardScheme} takes form with 
\[
(u_{k+1}, \sigma_{k+1})=\left(G_{1}(u_{k},\sigma_{k}),G_2(u_{k},\sigma_{k})\right)=G(u_{k},\sigma_{k}).
\]

The remainder of this subsection examines the operator $G$ in terms of continuity, differentiability, and related regularity properties. We begin by establishing that $G$ is Lipschitz continuous.
\begin{lemma}\label{lemma:lipschitz}
Provided $1- 2\lambda \|\nabla u\|_{\infty} >0$, for any  $(v,\sigma),(w,\rho)\in (V_h,\Sigma_h)$ satisfies \eqref{discretedivfreeOldroydB}
\begin{align}\label{eqn:lipschitz}
\|G(v,\sigma)-G(w,\rho)\|_{*}\leq C_L \|(v,\sigma)-(w,\rho)\|_{*},
\end{align}
where $C_L=\min\left\{  \frac{C C_K^2}{h^{2}\sqrt{2S\lambda^{-1} C_0 (1-\alpha)}}, 1\right\}$ such that $S\coloneqq\min \{\left(1-2\lambda \|\nabla u\|_{\infty} - 2\lambda \right),4\alpha(1-\alpha) \}$ and $ C_0 = \min \left\{   	\frac{1- 2\lambda  \|\nabla u\|_{\infty}}{2\alpha},1-\alpha \right\}$. 
\end{lemma}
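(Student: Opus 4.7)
The plan is to follow the standard ``test with the difference'' strategy for Lipschitz bounds on discrete solution operators. Let $(\tilde u_1, \tilde\sigma_1) = G(v,\sigma)$ and $(\tilde u_2, \tilde\sigma_2) = G(w,\rho)$ denote the two outputs of \eqref{discretedivfreeOldroydB}, and set $\Phi^u = \tilde u_1 - \tilde u_2$ and $\Phi^\sigma = \tilde\sigma_1 - \tilde\sigma_2$. Subtracting the two systems and using the algebraic identity
\[
b(v,\tilde\sigma_1,\tau) - b(w,\tilde\sigma_2,\tau) = b(v-w,\tilde\sigma_1,\tau) + b(w,\Phi^\sigma,\tau),
\]
together with the analogous splittings for the two remaining $\lambda$-nonlinearities, I can separate the equation for $(\Phi^u,\Phi^\sigma)$ into pieces in which $w$ multiplies $\Phi^\sigma$ (self terms) and genuinely cross terms in which $(v-w)$ or $\nabla(v-w)$ is paired with the previously computed $\tilde\sigma_1$ or $\nabla \tilde\sigma_1$.

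I would then test the resulting difference system with $\tau = \Phi^\sigma$ in the first equation and $v=\Phi^u$ in the second, multiply the second by $2\alpha$, and add. The coupling $(\Phi^\sigma,\mathbb{D}(\Phi^u))$ cancels and the term $b(w,\Phi^\sigma,\Phi^\sigma)$ vanishes by the antisymmetric construction \eqref{divform}, exactly as in the proofs of Lemmas \ref{lem:stability} and \ref{lem:uniqueness}. This yields an energy identity of the form
\[
\|\Phi^\sigma\|^2 + 4\alpha(1-\alpha)\|\mathbb{D}(\Phi^u)\|^2 = R_w + R_{v-w}.
\]
The self portion $R_w$ collects the two contributions involving $\nabla w$ against two copies of $\Phi^\sigma$; these are controlled in the usual way by $2\lambda\|\nabla w\|_\infty\|\Phi^\sigma\|^2$ and absorbed into the coefficient $S$ on the left under the hypothesis $1 - 2\lambda\|\nabla w\|_\infty > 0$.

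The main obstacle is bounding $R_{v-w}$ by $\|\mathbb{D}(v-w)\|\|\Phi^\sigma\|$. The difficulty is that $\tilde\sigma_1$ lives only in the discrete $L^2$-type space $\Sigma_h$, so a direct H\"older estimate does not close the bound in the $*$-norm of $v-w$. I would handle this by passing to $L^\infty$ through the standard finite element inverse estimate $\|v-w\|_\infty \le Ch^{-d/2}\|v-w\|$, and, for the convective summand $b(v-w,\tilde\sigma_1,\Phi^\sigma)$, additionally invoking \eqref{invineq} to replace $\nabla \tilde\sigma_1$ by $C_I h^{-1}\tilde\sigma_1$. Chaining these two inverse estimates is precisely what manufactures the $h^{-2}$ prefactor appearing in the stated $C_L$. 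The resulting factor $\|\tilde\sigma_1\|$ is then controlled by the a priori bound \eqref{eqn:welldefiniteG} and absorbed into the generic constant $C$; the factor $C_K$ enters through the Korn inequality used to pass from $\|\nabla(v-w)\|$ to $\|\mathbb{D}(v-w)\|$.

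Finally, Young's inequality splits the cross bounds into $\|\Phi^\sigma\|^2$-type and $\|\mathbb{D}(v-w)\|^2$-type parts; moving the former to the left leaves a coefficient of at least $S$, after which dividing through by $S$ and taking a square root produces the stated estimate $\|G(v,\sigma)-G(w,\rho)\|_* \le C_L\|(v,\sigma)-(w,\rho)\|_*$. The $\min\{\,\cdot\,,1\}$ in the definition of $C_L$ reflects the observation that, on any bounded region where \eqref{eqn:welldefiniteG} holds uniformly, $G$ is trivially Lipschitz with constant independent of $h$, so one may always take the smaller of the two constants. The chief technical care lies in carefully tracking the $h$-powers when the two inverse estimates are chained in the convective cross term, since it is this step, and only this step, that fixes the $h^{-2}$ dependence in the final Lipschitz constant.
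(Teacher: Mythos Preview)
Your proposal follows the same skeleton as the paper's proof: subtract the two instances of \eqref{discretedivfreeOldroydB}, test with $(\Phi^u,\Phi^\sigma)$, multiply the momentum equation by $2\alpha$ and add so that the coupling cancels and $b(w,\Phi^\sigma,\Phi^\sigma)$ vanishes, absorb the self terms $2\lambda\|\nabla w\|_\infty\|\Phi^\sigma\|^2$ into the left, bound the cross terms using the a~priori estimate \eqref{eqn:welldefiniteG} on $\tilde\sigma_1$ together with inverse inequalities, apply Young, and divide by $S$. The only technical difference is in how the $h^{-2}$ is produced. The paper first writes each cross term in the form $C\|\nabla e_1\|\,\|\nabla G_2\|\,\|\nabla e_2^G\|$ (H\"older plus Sobolev embedding on the $H^1_0$ velocity factor) and then applies the inverse inequality \eqref{invineq} to \emph{both} stress gradients, giving two clean factors of $h^{-1}$. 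You instead push the velocity difference into $L^\infty$ via the $L^2\!\to\!L^\infty$ inverse estimate and use \eqref{invineq} once on $\nabla\tilde\sigma_1$. Your route also closes, but two small points: (i) for the deformation cross terms $(\nabla(v-w)\,\tilde\sigma_1,\Phi^\sigma)$ you need the $L^\infty$ inverse bound on $\nabla(v-w)$, not on $v-w$ itself; and (ii) your chained exponent is $h^{-d/2-1}$, which matches $h^{-2}$ only when $d=2$ and gives $h^{-5/2}$ in three dimensions. Neither is a genuine gap---Lipschitz continuity still follows---but the paper's placement of the two inverse estimates on the stress factors is what yields exactly the $h^{-2}$ appearing in the stated $C_L$.
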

\begin{remark}
The Lipschitz constant $C_L$ is global, and the inverse scaling with $h$ will prevent $C_L$ from being small.  However, following Lemma \ref{lem:stability}, local contractiveness is possible if $v$ and $w$ are sufficiently close to the true solution velocity, and if the problem data is sufficiently small.
\end{remark}

\begin{proof}
Let's subtract \eqref{discretedivfreeOldroydB} with $(w,\rho)\in (V_h,\Sigma_h)$ from \eqref{discretedivfreeOldroydB} with $(u,\sigma) \in (V_h,\Sigma_h)$ such that $G(w,\rho)=\left(G_{1}(w,\rho),G_2(w,\rho)\right)=(\tilde w,\tilde\rho)$ and $G(u,\sigma)=\left(G_{1}(u,\sigma),G_2(u,\sigma)\right)=(\tilde u,\tilde\sigma)$, respectively. Then, denoting $e_1=u-w$, $e_1^G=G_1(u,\sigma)-G_1(w,\rho)$ and $e_2=\sigma-\rho$ $e_2^G=G_2(u,\sigma)-G_2(w,\rho)$ provides
\begin{equation}
\begin{aligned}
	\left(e_2^G,\tau\right) 
	+
	\lambda  ( 
	b(e_1,G_{2}(u,\sigma),\tau) 
	+
	b(w,e_2^G,\tau) 
	- \left(\nabla e_1 G_{2}(w,\rho),\tau\right)
	\\
	- \left(\nabla u e_2^G,\tau\right)
	-\left(G_{2}(w,\rho) (\nabla e_1)^T,\tau\right) 
	-\left(e_2^G\nabla u^T,\tau\right) )
	-
	2\alpha  \left(\mathbb{D}(e_1^G),\tau\right) 
	=
	0,
	\\
	\left(e_2^G,\mathbb{D}(v)\right)
	+
	2(1-\alpha)\left(\mathbb{D}(e_1^G),\mathbb{D}(v)\right) 
	=
	0.
\end{aligned}
\end{equation}
Multiply the second equation by $2\alpha$ and add to the first equation, then choose $\tau=e_1^G$ and $v=e_2^G$ to get
\begin{align*}
\|e_2^G\|^2
+
4\alpha(1-\alpha)\|\mathbb{D}(e_1^G)\|^2 
=&
-
\lambda b(e_1,G_{2}(u,\sigma),e_2^G) 
+
\lambda \left(\nabla e_1 G_2(w,\rho),e_2^G\right)
\\
&+
\lambda \left(\nabla u e_2^G,e_2^G\right)
+
\lambda\left(G_{2}(w,\rho) (\nabla e_1)^T,e_2^G\right) 
+
\lambda\left(e_2^G\nabla u^T,e_2^G\right),
\end{align*}
where $b(w,e_2^G,e_2^G) 
$ vanishes. Applying H\"older's with Sobolev embedding now gives
\begin{align*}
\|e_2^G\|^2
+
4\alpha(1-\alpha)\|\mathbb{D}(e_1^G)\|^2 
\leq&
\lambda C \|\nabla e_1\| \|\nabla G_{2}(u,\sigma)\| \|\nabla e_2^G\|
+
\lambda \|\nabla e_1\|  \|\nabla G_{2}(w,\rho)\| \|\nabla e_2^G\|
\\
&+
2 \lambda \|\nabla u\|_{\infty} \|e_2^G\|^2 
+
\lambda \|\nabla G_{2}(w,\rho)\| \|(\nabla e_1)^T\| \|\nabla e_2^G\| .
\end{align*}
Now, by  \eqref{korn} and inverse estimate \eqref{invineq}, we obtain
\begin{align*}
(1-2\lambda \|\nabla u\|_{\infty})\|e_2^G\|^2
+
4\alpha(1-\alpha)\|\mathbb{D}(e_1^G)\|^2 
\leq&
\lambda C C_K h^{-2}  \frac{C_K}{\sqrt{4C_0 (1-\alpha)}}\|f\|_{-1} \|\mathbb{D} (e_1)\| \| e_2^G\|,
\end{align*}
where $\| G_{2}(w,\rho)\|$ is bounded by Lemma \eqref{lemma:welldefiniteG}.
Using Young's inequality finally gives
\begin{align*}
(1-2\lambda \|\nabla u\|_{\infty} -2\lambda)\|e_2^G\|^2
+
4\alpha(1-\alpha)\|\mathbb{D}(e_1^G)\|^2 
\leq&
\lambda C^2 h^{-4}  \frac{C_K^4}{2 C_0 (1-\alpha)}\|f\|^2_{-1} \|\mathbb{D} (e_1)\|^2.
\end{align*}
Dividing both sides by  $S\coloneqq\min \{\left(1-2\lambda \|\nabla u\|_{\infty} - 2\lambda \right),4\alpha(1-\alpha) \}$ and taking the square root of both sides provides the Lipschits continuity bound \eqref{eqn:lipschitz} on $*-$norm with the fact that $\| \mathbb{D}(e_1) \|^2 \leq \| (e_1,e_2) \|_*^2 :=  \| \mathbb{D}(e_1) \|^2 + \| e_2\|^2.$
\end{proof}
Next, we establish that $G$ is Fr\'echet differentiable with Lipschitz continuous derivative. To do that, we first define the operator $G'$, and then demostrate it serves as the Fr\'echet derivative operator of $G$.

\begin{definition}\label{def:G'}
For each \((u,\sigma)\in (V_h, \Sigma_h)\), consider the operator
\[
G'(u,\sigma;\,\cdot,\cdot)\colon (V_h, \Sigma_h)\;\to\;(V_h, \Sigma_h),
\]
defined by
\[
G'(u,\sigma;\,t,s)\;\coloneqq\;\bigl(G_1'(u,\sigma;\,t,s),\;G_2'(u,\sigma;\,t,s)\bigr),
\]
which satisfies
\begin{equation}\label{eqn:G'}
\begin{aligned}
\left(G'_{2}(u,\sigma;t,s),\tau\right) 
+
\lambda  (
b(u,G'_{2}(u,\sigma;t,s),\tau)
+b(t,G_{2}(u,\sigma),\tau)  
- \left(\nabla u G'_{2}(u,\sigma;t,s),\tau\right)&
\\
- \left(\nabla t G_{2}(u,\sigma),\tau\right)
-\left(G'_{2}(u,\sigma;t,s)\nabla u^T,\tau\right) 
-\left(G_{2}(u,\sigma)\nabla t^T,\tau\right) 
)
-
2\alpha  \left(\mathbb{D}(G'_{1}(u,\sigma;t,s)),\tau\right) 
&=
0,
\\
\left(G'_{2}(u,\sigma;t,s),\mathbb{D}(v)\right)
+
2(1-\alpha)\left(\mathbb{D}(G'_{1}(u,\sigma;t,s)),\mathbb{D}(v)\right) 
&=
0,
\end{aligned}
\end{equation}
for all \((t,s)\in (V_h, \Sigma_h)\).
\end{definition}

\begin{lemma}\label{lemma:well-def-G'}
The operator $G'$ introduced in Definition \ref{def:G'} is well-defined for all  $u,t\in V_h $ and $\sigma,s\in\Sigma_h$ such that
\[
\|G'(u,\sigma;t,s)\|_{*} \leq C_G \|(t,s)\|_{*}
\]
where $C_G=C_L$.
\end{lemma}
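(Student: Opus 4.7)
The plan is to observe that \eqref{eqn:G'} is a \emph{linear} system in the unknown pair $(G_1'(u,\sigma;t,s),\,G_2'(u,\sigma;t,s))$ whose left-hand-side bilinear form is exactly the one that appeared on the left of \eqref{discretedivfreeOldroydB}, while the three terms $\lambda b(t,G_2(u,\sigma),\tau)$, $-\lambda(\nabla t\,G_2(u,\sigma),\tau)$, and $-\lambda(G_2(u,\sigma)(\nabla t)^T,\tau)$ play the role of fixed right-hand-side data, depending only on the ``direction'' $t$ and on the already-computed stress $G_2(u,\sigma)$. Well-posedness of the mapping $(t,s)\mapsto G'(u,\sigma;t,s)$ therefore follows immediately from the coercivity argument used in Lemma \ref{lemma:welldefiniteG}, under the same smallness hypothesis $1-2\lambda\|\nabla u\|_\infty>0$.

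For the quantitative bound, my plan is to repeat the proof of Lemma \ref{lemma:lipschitz} almost verbatim, since the algebraic structure of \eqref{eqn:G'} coincides with that of the difference system studied there, with $t$ playing the role of $e_1=u-w$ and the precomputed stress $G_2(u,\sigma)$ playing the role of $G_2(w,\rho)$. Concretely, I would test the first equation of \eqref{eqn:G'} with $\tau=G_2'(u,\sigma;t,s)$, test the second with $v=G_1'(u,\sigma;t,s)$, multiply the second equation by $2\alpha$, and add. As before, the symmetric cross term $2\alpha(\mathbb{D}(G_1'),G_2')$ cancels and the trilinear piece $b(u,G_2',G_2')$ vanishes, leaving
\begin{align*}
\|G_2'\|^2+4\alpha(1-\alpha)\|\mathbb{D}(G_1')\|^2
&= -\lambda\, b(t,G_2(u,\sigma),G_2')+\lambda\bigl((\nabla u)G_2'+G_2'(\nabla u)^T,\,G_2'\bigr)\\
&\quad+\lambda\bigl((\nabla t)\,G_2(u,\sigma)+G_2(u,\sigma)(\nabla t)^T,\,G_2'\bigr).
\end{align*}

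I would then estimate the right-hand side exactly as in Lemma \ref{lemma:lipschitz}: the two $\nabla u$ contributions are at most $2\lambda\|\nabla u\|_\infty\|G_2'\|^2$ and are absorbed into the coercive term on the left; the remaining three terms are bounded by H\"older's inequality plus Sobolev embedding plus the inverse inequality \eqref{invineq}, substituting the a priori bound $\|G_2(u,\sigma)\|\le C_K/\sqrt{4C_0(1-\alpha)}\,\|f\|_{-1}$ that is furnished by Lemma \ref{lemma:welldefiniteG}. A Young's inequality, division by $S$, a square root, and the trivial comparison $\|\mathbb{D}(t)\|\le\|(t,s)\|_*$ then deliver the claimed estimate with $C_G=C_L$.

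The main obstacle I anticipate is purely bookkeeping: checking that every term on the right-hand side above has an \emph{exact} counterpart in Lemma \ref{lemma:lipschitz}, so that the resulting constant really coincides with $C_L$ rather than with some larger multiple. Because the a priori bound on $\|G_2(u,\sigma)\|$ is identical to the one used there for $\|G_2(w,\rho)\|$, and because no new nonlinear structure appears in \eqref{eqn:G'}, this matching goes through term-by-term and no genuinely new estimate is needed.
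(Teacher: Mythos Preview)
Your proposal is correct and follows essentially the same route as the paper's own proof: test \eqref{eqn:G'} with $\tau=G_2'$ and $v=G_1'$, multiply the second line by $2\alpha$ and add, absorb the $\nabla u$ contributions into the left with H\"older, bound the remaining $t$-terms via H\"older/Sobolev plus the inverse estimate \eqref{invineq} and the a~priori bound from Lemma \ref{lemma:welldefiniteG}, apply Young, divide by $S$, and take a square root. The paper carries out exactly this computation, and the resulting constant indeed coincides with $C_L$ for the structural reason you identify.
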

\begin{proof}
Setting $\tau=G'_{2}(u,\sigma;t,s)$	 and $v=G'_{1}(u,\sigma;t,s)$, then multiplying second equation by $2\alpha$ in \eqref{eqn:G'} and adding it to first one produces 
\begin{align*}
\| G'_{2}(u,\sigma;t,s)\|^2
+&
4\alpha(1-\alpha)\|\mathbb{D}(G'_{1}(u,\sigma;t,s))\|^2
=
-
\lambda b(t,G_{2}(u,\sigma),G'_{2}(u,\sigma;t,s)) 
\\
&+
\lambda \left(\nabla u G'_{2}(u,\sigma;t,s),G'_{2}(u,\sigma;t,s) \right)
+ 
\lambda \left(\nabla t G_{2}(u,\sigma),G'_{2}(u,\sigma;t,s)\right)
\\
&+
\lambda \left(G'_{2}(u,\sigma;t,s)\nabla u^T,G'_{2}(u,\sigma;t,s)\right) 
+
\lambda \left(G_{2}(u,\sigma)\nabla t^T,G'_{2}(u,\sigma;t,s)\right) .
\end{align*}
 Applying H\"older's with Sobolev embedding gives,
\begin{align*}
\| G'_{2}(u,\sigma;t,s)\|^2
+
4\alpha(1-\alpha)\|\mathbb{D}(G'_{1}(u,\sigma;t,s))\|^2
\leq&
\lambda C C_K \|\mathbb{D} (t)\| \|\nabla G_{2}(u,\sigma)\| \|\nabla G'_{2}(u,\sigma;t,s)\| 
\\&
+
2 \lambda \|\nabla u\|_{\infty} \|\nabla G'_{2}(u,\sigma;t,s)\|^2.
\end{align*}
Then, using  \eqref{korn} and inverse estimate \eqref{invineq} along with Lemma \eqref{lemma:welldefiniteG} and
Young's inequality gives
\begin{align*}
(1-2\lambda \|\nabla u\|_{\infty} -2\lambda)\| G'_{2}(u,\sigma;t,s)\|^2
+
4\alpha(1-\alpha)\|\mathbb{D}(G'_{1}(u,\sigma;t,s))\|^2
\leq&
\lambda C^2 h^{-4} \frac{C_K^4}{2 C_0 (1-\alpha)} \|f\|_{-1}^2 \|\mathbb{D} (t)\|^2.
\end{align*}
Dividing both sides by  $S\coloneqq\min \{\left(1-2\lambda \|\nabla u\|_{\infty} - 2\lambda \right),4\alpha(1-\alpha) \}$ and taking the square root of both sides conclude the proof.
\end{proof}

Next, we confirm that  $G'$ operator as the Fr\'echet derivative operator of $G$. Specifically, for each $(u,\sigma)\in(V_h,\Sigma_h)$, there
exists a constant $\mathcal{F}$ such that 
\[
\|G(u+t,\sigma+s)-G(u,\sigma)-G'(u,\sigma;t,s)\|_{*}\leq \mathcal{F} \|(t,s)\|_{*}^2,
\]
for any  $(t,s)\in(V_h,\Sigma_h)$.
\begin{lemma}\label{lemma:frechet}
For arbitrary $(u,\sigma)\in(V_h,\Sigma_h)$ and sufficiently small $(t,s)\in(V_h,\Sigma_h)$, the bound
\begin{align}\label{eqn:frechet}
\|G(u+t,\sigma+s)-G(u,\sigma)-G'(u,\sigma;t,s)\|_{*}
\leq &
\mathcal{F}\| (t,s)\|_{*}^2
\end{align}
holds, with $\mathcal{F}=\frac{ C C_L C_K}{h^{2}\sqrt{8S\lambda^{-1}}}$.
\end{lemma}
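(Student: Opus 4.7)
The plan is to derive a variational equation for the remainder
$(R_1, R_2) := G(u+t,\sigma+s) - G(u,\sigma) - G'(u,\sigma;t,s)$, show that it satisfies a linear problem of the same structure as \eqref{discretedivfreeOldroydB} with a right-hand side that is quadratic in $(t,s)$, and then apply the energy argument of Lemma \ref{lemma:welldefiniteG} together with the Lipschitz bound of Lemma \ref{lemma:lipschitz} to obtain \eqref{eqn:frechet}.

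First I would introduce the shorthand $(\tilde u,\tilde\sigma):=G(u,\sigma)$, $(\tilde u^{ts},\tilde\sigma^{ts}):=G(u+t,\sigma+s)$, and split $R_1 := \tilde u^{ts} - \tilde u - G'_1(u,\sigma;t,s)$, $R_2 := \tilde\sigma^{ts} - \tilde\sigma - G'_2(u,\sigma;t,s)$. Subtracting the defining system \eqref{discretedivfreeOldroydB} at $(u,\sigma)$ from the same system at $(u+t,\sigma+s)$, and then subtracting the definition \eqref{eqn:G'} of $G'(u,\sigma;t,s)$, the key algebraic step is to decompose each nonlinear expression $F(u+t,\tilde\sigma^{ts}) - F(u,\tilde\sigma)$ into a part linear in $R_2$ plus a quadratic residual. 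For the convective term this reads $b(u+t,\tilde\sigma^{ts},\tau) - b(u,\tilde\sigma,\tau) - b(u,G'_2,\tau) - b(t,\tilde\sigma,\tau) = b(u, R_2,\tau) + b(t,\tilde\sigma^{ts}-\tilde\sigma,\tau)$, and fully analogous identities hold for $(\nabla(u+t)\tilde\sigma^{ts},\tau)$ and $(\tilde\sigma^{ts}\nabla(u+t)^T,\tau)$. In every case the first summand on the right matches exactly the coefficient structure of the $G'$-problem frozen at $u$ and gets incorporated on the left of the equation for $R$, while the second summand is bilinear in $(t,s)$ and in the Lipschitz difference $G(u+t,\sigma+s)-G(u,\sigma)$.

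After this reorganization, $(R_1,R_2)$ satisfies the same linear problem as \eqref{discretedivfreeOldroydB} (with $u$ as the frozen velocity) but with a data functional composed entirely of these quadratic residuals. I then test with $(v,\tau)=(R_1,R_2)$, multiply the momentum equation by $2\alpha$, and sum, so that $b(u,R_2,R_2)$ vanishes just as in the proofs of Lemmas \ref{lemma:welldefiniteG} and \ref{lemma:lipschitz}. Bounding the quadratic terms by H\"older, then applying \eqref{korn}, the inverse estimate \eqref{invineq}, and the Lipschitz estimate $\|G(u+t,\sigma+s)-G(u,\sigma)\|_* \leq C_L \|(t,s)\|_*$ from Lemma \ref{lemma:lipschitz}, each residual term is majorized by $C\,h^{-2}\,C_L\,\|(t,s)\|_*^2\,\|R_2\|$. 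Young's inequality absorbs the $\|R_2\|$ factor into the coercive $\|R_2\|^2$ on the left, and dividing by $S$ produces \eqref{eqn:frechet} with the stated constant $\mathcal{F}=\frac{CC_LC_K}{h^{2}\sqrt{8S\lambda^{-1}}}$.

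The main obstacle is the bookkeeping of the splitting step: each of the three nonlinear stress terms must be decomposed so that the piece matching the $G'$-structure cancels exactly, leaving a clean residual in which $(t,s)$ is paired with the \emph{full} Lipschitz difference $G(u+t,\sigma+s)-G(u,\sigma)$ rather than with $G'$ (this is what converts one factor of $\|(t,s)\|_*$ into a second via Lemma \ref{lemma:lipschitz}). The smallness hypothesis on $(t,s)$ enters only to guarantee $S>0$ and that $u+t$ still satisfies the standing coercivity condition $1-2\lambda\|\nabla(u+t)\|_\infty > 0$ needed to invoke Lemma \ref{lemma:lipschitz}; no further contractivity of $G$ is required.
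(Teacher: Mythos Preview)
Your proposal is correct and follows essentially the same route as the paper: subtract the $(u,\sigma)$-system and the $G'$-system from the $(u+t,\sigma+s)$-system, observe that the remainder $(R_1,R_2)$ satisfies the frozen-$u$ linear problem with quadratic residual terms of the form $b(t,\tilde\sigma^{ts}-\tilde\sigma,\cdot)$, $(\nabla t(\tilde\sigma^{ts}-\tilde\sigma),\cdot)$, etc., test with $(R_1,R_2)$, and then use H\"older/Sobolev, the inverse inequality, \eqref{korn}, Lemma~\ref{lemma:lipschitz}, and Young to close. Your explicit algebraic identity for the $b$-term (and its analogues) is exactly the bookkeeping the paper carries out, and your remark that the smallness of $(t,s)$ is only used to keep $S>0$ is accurate.
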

\begin{proof}


Subtracting sum of \eqref{discretedivfreeOldroydB} with  $(u,\sigma) \in (V_h,\Sigma_h)$ and \eqref{eqn:G'} from \eqref{discretedivfreeOldroydB} with $(u+t,\sigma+s)\in(V_h,\Sigma_h)$ produce
\begin{align*}
\left(\eta_2,\tau\right) 
+
\lambda  ( 
b(u,\eta_2,\tau) 
+b(t,G_{2}(u+t,\sigma+s)-G_{2}(u,\sigma),\tau) 
- \left(\nabla u\eta_2,\tau\right)
- \left(\nabla t G_{2}(u,\sigma),\tau\right)
\\
-\left(\eta_2\nabla u^T,\tau\right) 
-\left(G_{2}(u,\sigma)\nabla t^T,\tau\right)
)
-
2\alpha  \left(\mathbb{D}(\eta_1),\tau\right) 
&=
0,
\\
\left(\eta_2,\mathbb{D}(v)\right)
+
2(1-\alpha)\left(\mathbb{D}(\eta_1),\mathbb{D}(v)\right) 
&=
0.
\end{align*}
where $\eta_1=G_1(u+t,\sigma+s)-G_1(u,\sigma)-G'_1(u,\sigma;t,s)$ and $\eta_2=G_2(u+t,\sigma+s)-G_2(u,\sigma)-G'_2(u,\sigma;t,s)$.

Then, we choose $\tau=\eta_1$ and $v=\eta_2$ which vanishes the first nonlinear term, multiply second equation by $2\alpha$ and add to the first one to get
\begin{align*}
\|\eta_2\|^2
+
4\alpha(1-\alpha)\|\mathbb{D}(\eta_1)\|^2 
= &
-\lambda b(t,G_{2}(u+t,\sigma+s)-G_{2}(u,\sigma),\eta_2) 
+\lambda \left(\nabla u\eta_2,\eta_2\right)
\\&+\lambda \left(\nabla t(G_{2}(u+t,\sigma+s)-G_{2}(u,\sigma)),\eta_2\right)
+\lambda\left(\eta_2\nabla u^T,\eta_2\right) 
\\&
+\lambda\left((G_{2}(u+t,\sigma+s)-G_{2}(u,\sigma))\nabla t^T,\eta_2\right).
\end{align*}
The H\"older and Sobolev inequalities provide us with
\begin{align*}
\|\eta_2\|^2
+
4\alpha(1-\alpha)\|\mathbb{D}(\eta_1)\|^2 
\leq &
\lambda C\|\nabla t\| \|\nabla (G_{2}(u+t,\sigma+s)-G_{2}(u,\sigma))\| \|\nabla \eta_2\|
+2 \lambda \|\nabla u\|_{\infty}  \|\eta_2\|^2.
\end{align*}
Using an inverse estimate and \eqref{korn}, followed by Young's inequality, we get that
\begin{align*}
(1-2\lambda \|\nabla u\|_{\infty}-2\lambda)\|\eta_2\|^2
+
4\alpha(1-\alpha)\|\mathbb{D}(\eta_1)\|^2 
\leq &
\frac{\lambda}{8} C^2 h^{-4} C_L^2 C_K^2\|\mathbb{D} (t)\|^2 \| (t,s)\|_{*}^2,
\end{align*}
thanks to Lemma \eqref{lemma:lipschitz}. Then, using the fact that $\|\mathbb{D}(t)\|^2 \leq \|\mathbb{D}(t)\|^2+\|s\|^2=\|(t,s)\|^2_{*}$, we conclude \eqref{eqn:frechet}.
\end{proof}

The final step is to prove that the Lipschitz continuity of $G'$ over $V_h\times\Sigma_h$.

\begin{lemma}\label{lemma:lipschitz-G'}
The operator $G$ is Lipschitz continuously differentiable on  $V_h\times\Sigma_h$ where its derivative denoted by $G'$, such that
\begin{align}\label{eqn:lipschitz-G'}
\|G'(u+t,\sigma+s;\phi,\xi)-G'(u,\sigma;\phi,\xi)\|_{*}
\leq&
\hat C_L \|(\phi,\xi)\|_{*}  \| (t,s)\|_{*},
\end{align}
where $\hat C_L = \frac{ C_L C_K C }{2h^2 \sqrt{S \lambda^{-1}}}$, for all $(u,\sigma),(t,s),(k,l)\in(V_h,\Sigma_h)$.
\end{lemma}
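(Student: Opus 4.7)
The plan is to follow the template used in Lemmas \ref{lemma:lipschitz} and \ref{lemma:frechet}. I would first write down the defining system \eqref{eqn:G'} at base point $(u+t,\sigma+s)$ with direction $(\phi,\xi)$, and at base point $(u,\sigma)$ with the same direction $(\phi,\xi)$, then subtract. Denote
\[
\psi_1 := G'_1(u+t,\sigma+s;\phi,\xi)-G'_1(u,\sigma;\phi,\xi),\qquad \psi_2 := G'_2(u+t,\sigma+s;\phi,\xi)-G'_2(u,\sigma;\phi,\xi).
\]
The subtraction yields an equation whose linear-in-$\psi$ part has exactly the structure of \eqref{eqn:G'} at base $(u,\sigma)$, plus a sum of ``cross'' terms that collects all the pieces where the base points disagree. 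These cross terms split into two families: those carrying a factor of $t$ (or $\nabla t$, or $\nabla t^T$) multiplied by $G'_2(u+t,\sigma+s;\phi,\xi)$ (call their sum $R_t$), and those carrying a factor of $\phi$ (or $\nabla\phi$, or $\nabla\phi^T$) multiplied by $G_2(u+t,\sigma+s)-G_2(u,\sigma)$ (call their sum $R_\phi$).

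Next, I would test with $\tau=\psi_2$ and $v=\psi_1$, so that $b(u,\psi_2,\psi_2)=0$ by the skew-symmetry of $b$, multiply the divergence-type equation by $2\alpha$, and add. This produces
\[
\|\psi_2\|^2 + 4\alpha(1-\alpha)\|\mathbb{D}(\psi_1)\|^2 \;\le\; 2\lambda\|\nabla u\|_{\infty}\|\psi_2\|^2 \;+\; |R_t| \;+\; |R_\phi|.
\]
For $R_t$, apply Hölder with Sobolev embedding and then the inverse inequality \eqref{invineq} to bound $\|\nabla G'_2(u+t,\sigma+s;\phi,\xi)\|$ by $C_I h^{-1}\|G'_2(u+t,\sigma+s;\phi,\xi)\|$, which in turn is controlled by $C_I h^{-1} C_G \|(\phi,\xi)\|_*$ via Lemma \ref{lemma:well-def-G'}. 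For $R_\phi$, apply Hölder and \eqref{invineq} on $\|\nabla(G_2(u+t,\sigma+s)-G_2(u,\sigma))\|$, then invoke Lemma \ref{lemma:lipschitz} to obtain a bound of the form $C_I h^{-1} C_L \|(t,s)\|_*$. A further use of \eqref{invineq} on $\|\nabla \psi_2\|$ followed by Young's inequality absorbs $\|\psi_2\|^2$ into the left-hand side; dividing by $S\coloneqq\min\{1-2\lambda\|\nabla u\|_{\infty}-2\lambda,\,4\alpha(1-\alpha)\}$ and taking the square root produces \eqref{eqn:lipschitz-G'} with the stated $\hat C_L$.

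The hard part is the careful bookkeeping of the many cross terms: ensuring each one contributes exactly one factor of $\|(t,s)\|_*$ and one factor of $\|(\phi,\xi)\|_*$ (unlike Lemma \ref{lemma:frechet}, where both came from $(t,s)$), and that every stray $\|\psi_2\|$ arising on the right-hand side is matched via Young so that it can be absorbed into the coercive $\|\psi_2\|^2$ on the left. The $h^{-2}$ scaling in $\hat C_L$ then arises from exactly two applications of the inverse inequality, paralleling the corresponding scaling already present in Lemmas \ref{lemma:lipschitz} and \ref{lemma:frechet}.
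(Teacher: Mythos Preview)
Your proposal is correct and follows essentially the same route as the paper: subtract the two instances of \eqref{eqn:G'}, test with $(\psi_1,\psi_2)$, combine the equations to cancel the mixed terms, then bound the two families of cross terms using Lemma \ref{lemma:well-def-G'} for the $t$-family and Lemma \ref{lemma:lipschitz} for the $\phi$-family, together with H\"older, Sobolev embedding, the inverse estimate, Korn and Young. The paper's proof carries out exactly this computation (with $e_1,e_2$ in place of your $\psi_1,\psi_2$), arriving at the same $S$ and the same $h^{-2}$ scaling in $\hat C_L$.
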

\begin{proof}
Let's subtract \eqref{eqn:G'} with $G'(u,\sigma;\phi,\xi)$ from \eqref{eqn:G'} with $G'(u+t,\sigma+s;\phi,\xi)$ and denoting $e_1=G'_{1}(u+t,\sigma+s;\phi,\xi)-G'_{1}(u,\sigma;\phi,\xi)$ and $e_2=G'_{2}(u+t,\sigma+s;\phi,\xi)-G'_{2}(u,\sigma;\phi,\xi)$ to get
\begin{align*}
\left(e_2,\tau\right) 
+
\lambda b(u,e_2,\tau)
+
\lambda b(t,G'_{2}(u+t,\sigma+s;\phi,\xi),\tau)
+
\lambda b(\phi,G_{2}(u+t,\sigma+s)-G_{2}(u,\sigma),\tau)  
\\- \lambda \left(\nabla (u) e_2,\tau\right)
- \lambda \left(\nabla (t) G'_{2}(u+t,\sigma+s;\phi,\xi),\tau\right)
- \lambda \left(\nabla \phi (G_{2}(u+t,\sigma+s)-G_{2}(u,\sigma)),\tau\right)
\\- \lambda \left(e_2\nabla (u)^T,\tau\right) 
- \lambda \left(G'_{2}(u+t,\sigma+s;\phi,\xi)\nabla (t)^T,\tau\right) 
-\lambda\left((G_{2}(u+t,\sigma+s)-G_{2}(u,\sigma))\nabla \phi^T,\tau\right) 
\\- 2\alpha  \left(\mathbb{D}(e_1),\tau\right)
&=0,
\\
\left(e_2,\mathbb{D}(v)\right)
+
2(1-\alpha)\left(\mathbb{D}(e_1),\mathbb{D}(v)\right) 
&=
0.
\end{align*}
By setting $\tau=e_2$ and $v=e_1$, the second term on the left hand side of the first equation vanishes. Then, multiplying the second one by $2\alpha$ ad adding two equations together yields 
\begin{align*}
\|e_2\|^2
+4\alpha(1-\alpha)\|\mathbb{D}(e_1)\|^2
+
\lambda b(t,G'_{2}(u+t,\sigma+s;\phi,\xi),e_2)
+
\lambda b(\phi,G_{2}(u+t,\sigma+s)-G_{2}(u,\sigma),e_2)  
\\- \lambda \left(\nabla (u) e_2,e_2\right)
- \lambda \left(\nabla (t) G'_{2}(u+t,\sigma+s;\phi,\xi),e_2\right)
- \lambda \left(\nabla \phi (G_{2}(u+t,\sigma+s)-G_{2}(u,\sigma)),e_2\right)
\\- \lambda \left(e_2\nabla (u)^T,e_2\right) 
- \lambda \left(G'_{2}(u+t,\sigma+s;\phi,\xi)\nabla (t)^T,e_2\right) 
-\lambda\left((G_{2}(u+t,\sigma+s)-G_{2}(u,\sigma))\nabla \phi^T,e_2\right) 
&=0.
\end{align*}
Using H\"older's inequality with Sobolev embedding, we get
\begin{align*}
(1-2 \lambda \|\nabla u\|_{\infty} )\|e_2\|^2
+4\alpha(1-\alpha)\|\mathbb{D}(e_1)\|^2
\leq&
\lambda C_1 \|\nabla t\| \|\nabla G'_{2}(u+t,\sigma+s;\phi,\xi)\| \|\nabla e_2\|
\\&
+ \lambda C_2 \|\nabla \phi\| \|\nabla (G_{2}(u+t,\sigma+s)-G_{2}(u,\sigma))\| \|\nabla e_2\|.
\end{align*}

Applying the inverse estimate \eqref{invineq} and \eqref{korn}, along with Young's inequality and the result from Lemmas \ref{lemma:lipschitz} and \ref{lemma:well-def-G'}, we derive
\begin{multline*}
(1-2\lambda\|\nabla u\|_{\infty}-2\lambda)\|e_2\|^2
+4\alpha(1-\alpha)\|\mathbb{D}(e_1)\|^2
\\
\leq
\frac{\lambda}{4} C_G^2 C_1^2 C_K^2 h^{-4} \|\mathbb{D} (t)\|^2 \|(\phi,\xi)\|_{*}^2
+ \frac{\lambda}{4} C_2^2 h^{-4} C_K^2\|\mathbb{D}( \phi)\|^2 \| (t,s)\|_{*}^2.
\end{multline*}
By the fact that $C_G=C_L$ and $\|\mathbb{D}  (t)\|^2\leq \|(t,s)\|_*^2$ and $\|\mathbb{D} ( \phi)\|^2\leq \|(\phi,\xi)\|_*^2$, , dividing both sides by  $S\coloneqq\min \{\left(1-2\lambda \|\nabla u\|_{\infty} - 2\lambda \right),4\alpha(1-\alpha) \}$ and  setting $C=\max\{C_1,C_2\}$, we get
\begin{align*}
\|(e_1,e_2)\|^2_{*}
\leq&
\frac{ C_L^2 C_K^2 C^2 }{4 S h^{4}\lambda^{-1}}\|(\phi,\xi)\|_{*}^2  \| (t,s)\|_{*}^2.
\end{align*}
Finally, by taking the square root on both sides, we confirm that  $G'$ is Lipschitz continuous, and \eqref{eqn:lipschitz-G'} is satisfied.
\end{proof}

Now, under the above analysis, we have established the accelerated convergence of \eqref{divfreeOldroydBPicardScheme}.

\begin{cor}
Theorem \ref{thm:genm} applies to the fixed-point operator \( g = G(u,\sigma) \) from Definition \ref{defn:G}, which corresponds to the iteration scheme \eqref{divfreeOldroydBPicardScheme} where the constants in the theorem are given by \( C_0 = C_L \) from Lemma \ref{lemma:lipschitz} and \( C_1 = \hat{C}_L \) from Lemma \ref{lemma:lipschitz-G'}.
\end{cor}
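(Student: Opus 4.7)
The plan is to verify, hypothesis by hypothesis, that the operator $g = G$ from Definition \ref{defn:G} satisfies all of the premises of Theorem \ref{thm:genm}, after which the conclusion follows by direct citation. Since the four lemmas of Section \ref{Gsec} were each established with one of these premises in mind, most of the work amounts to matching each premise to the appropriate lemma and identifying the constants.

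First, I would address the background smoothness of $G$. A fixed point of $G$ is precisely a solution of \eqref{weakdivfreeOldroydB}, whose existence in $V_h \times \Sigma_h$ is classical. Fr\'echet differentiability of $G$ with candidate derivative $G'$ from Definition \ref{def:G'} is exactly the content of Lemma \ref{lemma:frechet}, since the quadratic remainder estimate \eqref{eqn:frechet} is the definition of Fr\'echet differentiability in the $*$-norm. Continuity of the map $(u,\sigma) \mapsto G'(u,\sigma;\cdot,\cdot)$ in operator norm then follows from the Lipschitz estimate \eqref{eqn:lipschitz-G'} of Lemma \ref{lemma:lipschitz-G'}, so $G \in C^1(V_h \times \Sigma_h)$.

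For the two quantitative parts of Assumption \ref{assume:g}, I would invoke the operator-norm bound from Lemma \ref{lemma:well-def-G'}, namely $\|G'(u,\sigma;t,s)\|_* \le C_L \|(t,s)\|_*$, to set $C_0 = C_L$; and the Lipschitz bound \eqref{eqn:lipschitz-G'} to set $C_1 = \hat{C}_L$. I would note in passing that both constants depend on $h$ via the inverse estimate \eqref{invineq}, but are finite on any fixed mesh, which is all Theorem \ref{thm:genm} requires.

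The main obstacle, as flagged in the discussion following Assumption \ref{assume:fg}, is Assumption \ref{assume:fg} itself, which does not follow from the smoothness of $G$ alone. Here I would appeal to the convergence analysis of Section 3: under the smallness hypotheses of Lemma \ref{lem:stability} and Theorem \ref{conv}, the iteration \eqref{divfreeOldroydBPicardScheme} is a strict contraction on a neighborhood of the fixed point, which yields the lower bound $\|w_{k+1} - w_k\|_Y \geq \sigma \|x_k - x_{k-1}\|_Y$ for iterates trapped in that neighborhood, so Assumption \ref{assume:fg} holds locally. The angle condition on the columns of $F_j$ is an algorithmic safeguard that, as noted in Section \ref{aasection}, can be monitored during execution and enforced by the filtering technique of \cite{PR23}. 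With all hypotheses verified and $C_0$ and $C_1$ identified, Theorem \ref{thm:genm} applies verbatim and yields the residual recurrence \eqref{eqn:genm} with the stated constants, completing the proof.
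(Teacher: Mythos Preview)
Your proposal is correct and matches the paper's approach: the paper gives no explicit proof of this corollary at all, treating it as an immediate consequence of the preceding lemmas, and your write-up simply spells out that verification hypothesis by hypothesis. One small note: you cite Lemma~\ref{lemma:well-def-G'} for the bound $\|G'\|\le C_L$, whereas the corollary statement points to Lemma~\ref{lemma:lipschitz}; since Lemma~\ref{lemma:well-def-G'} establishes $C_G=C_L$, this is the same constant and your citation is in fact the more direct one for bounding $\|g'(x)\|$.
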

    
\section{Numerical experiments}

In order to investigate the proposed AA-Picard scheme for the viscoelastic Oldroyd-B model, two well-known benchmark experiments were carried out: flow of Oldroyd-B fluid past a circular cylinder, and flow of Oldroyd-B fluid in an L-shaped domain resulting from a contraction geometry with ratio 4:1. Both experiments were implemented in Python through the open source finite element FENICS library \cite{logg2012fenics} following the implementation presented in \cite{TUNC2023}.  As expected in this problem setting, the linear solve time associated with the Picard iteration dominates the run time, and the AA step is nearly negligible.

\subsection{Flow past a cylinder experiment}

In this first experiment we simulate a two-dimensional Oldroyd-B fluid flow past a circular cylinder between two parallel horizontal plates.  This is a standard benchmark test for viscoelastic flows, and we follow the setup presented in \cite{behr2004stabilized}. The modeled domain assumes a reflective symmetry of the flow with respect to the central horizontal line equidistant from the two plates (line $y=0$ in figure \ref{fig:domain}). Thus the model domain is the rectangle $[-10,20]\times [0,8]$ where the cylinder has a unit radius ($R=1$) with its center at the origin of the Cartesian coordinate system ($C=(0,0)$) giving a ratio of cylinder diameter to channel width equal to $\frac18$. No slip boundary conditions are prescribed on the cylinder surface and on the top plate. A fully developed parabolic flow is prescribed on the upstream $\{-10\}\times [0,8]$ and downstream $\{20\}\times[0,8]$ boundaries by

\begin{align}
    u & = \left(
    \tfrac{3}{2}\left[1-\frac{y^2}{h^2}\right] ,
    0
    \right), \\
    T & = \left[
    \begin{matrix}
        2 \lambda \mu_1 \left(-3 \frac{y}{h^2} \right)^2
        & \mu_1 \left(-3 \frac{y}{h^2} \right) \\
        \mu_1 \left(-3 \frac{y}{h^2} \right)
        & 0
    \end{matrix}
    \right].
\end{align}
Finally, a symmetry boundary condition is used downstream and upstream of the cylinder in the line $y=0$ by setting the vertical velocity component to zero. The polymer viscosity $\mu_1=\alpha$ was set to $0.41$ and consequently the solvent viscosity $\mu_2=1-\alpha$ was set to $0.59$.

\begin{figure}
    \centering
    \includegraphics[width=0.8\linewidth]{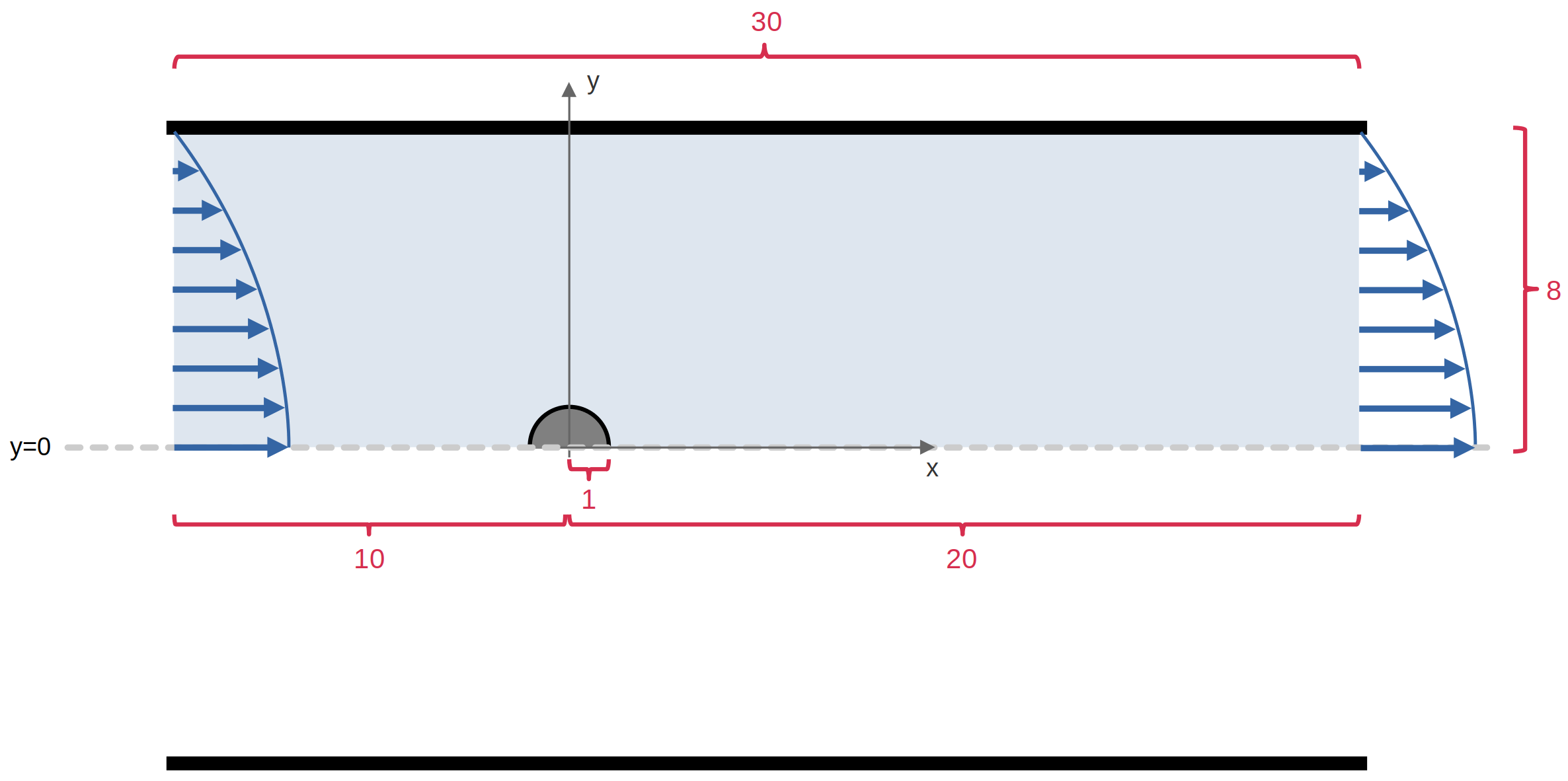}
    \caption{Flow of an Oldroyd-B past a circular cylinder domain.}
    \label{fig:domain}
\end{figure}

The domain was discretized through a mesh formed by triangular elements as follows: starting from an initial coarse resolution mesh, first the region of the rectangle $[-5,9] \times [0,6]$ around the cylinder was refined. Then, in this refined region, a second refinement was carried out on the circle of radius 3 concentric to the cylinder.  The resulting mesh, presented in figure \ref{fig:mesh}, has 29,339 triangular elements.

\begin{figure}
    \centering
    \includegraphics[width=1.0\linewidth]{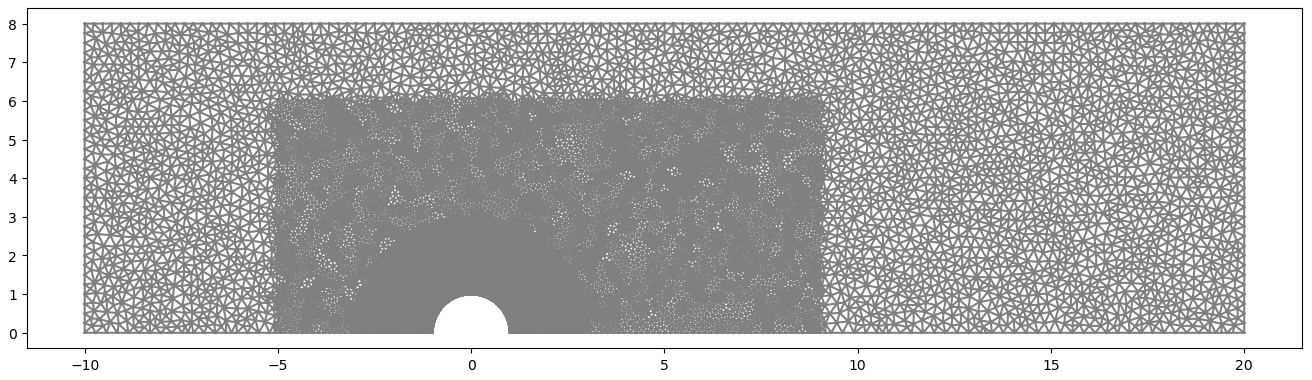}
    \caption{Flow of an Oldroyd-B past a circular cylinder mesh.}
    \label{fig:mesh}
\end{figure}

In this experiment we compare the Picard iteration scheme (i.e. (\ref{divfreeOldroydBPicardScheme}) using $m=0$ and $\beta=1.0$) with AA-Picard using varying $m$ and $\beta$.  Simulations were performed using AA-Picard with three parameter settings: (a) with depth $m=1$ and damping parameter $\beta_k=1.0$, (b) $m=10$ and $\beta_k=0.7$ and (c) $m=20$ and $\beta_k=0.5$.
In both the Picard and AA-Picard methods, the approximate solution was deemed converged when the nonlinear residual fell below $10^{-6}$ in the infinity norm. Table \ref{table:ExpCylinder} displays the number of iterations required for convergence with Picard and AA-Picard across different Weissenberg numbers. Although not presented in Table \ref{table:ExpCylinder}, we emphasize that the drag force exerted on cylinder calculated with the solutions of both iterative schemes are in agreement with the drag force on cylinder obtained in \cite{SunEtAl1999} and \cite{behr2004stabilized} for all values of the Weissenberg number tested. 

In Table \ref{table:ExpCylinder} it is seen that, when comparing the two iterations tested, the Picard iteration diverges for the Weissenberg number greater than or equal to $1.2$ while the AA-Picard iteration produced convergent solutions up to the Weissenberg number up to $1.6$ (for $m=20$ and $\beta=0.5$ and $m=10$ and $\beta=0.7$ cases).
Thus, we see that the use of AA allowed the extension of the values of the Weissenberg number to which the Picard iteration converges. 
Furthermore, Table \ref{table:ExpCylinder} demonstrates that incorporating AA into the Picard iteration significantly decreased the number of iterations required to compute the numerical solution of the model.
In Table \ref{table:ExpCylinder}, we also observe that the effectiveness of AA is sensitive to the choice of parameters m and beta.
We observe that increasing the value of m resulted in reducing the number of iterations although we note that there is a certain upper limit for m beyond which there is no further effect on reducing the number of iterations.
We also observed that reducing the value of the beta parameter also helps both in convergence and in reducing the number of iterations, however for beta we observed the existence of an optimal value (around 0.5) from which reducing the beta value increases the number of iterations again due to the weak upgrade in the input solution of the scheme.

\begin{table}[]
\centering
\caption{Comparison between the Picard iteration and the AA-Picard iteration in the flow of an Oldroyd-B model past a circular cylinder experiment.}
\begin{tabular}{c|c|c|c|c}
\hline
 &  Picard iteration & \multicolumn{3}{c}{Anderson/Picard iteration}            \\  \cline{2-5} 
 &  & $m=1$ and $\beta=1.0$ & $m=10$ and $\beta=0.7$ & $m=20$ and $\beta=0.5$ \\
$\lambda$ & No. iter.           & No. iter. & No. iter. & No. iter. \\ \hline
0.0     & 2                    & 2                 & 3                 & 3                \\
0.1     & 5                    & 5                 & 7                 & 6              \\
0.2     & 7                    & 7                 & 7                 & 7               \\
0.3     & 8                    & 8                 & 8                 & 7               \\
0.4     & 10                   & 9                 & 9                 & 8               \\
0.5     & 12                   & 11                & 10                & 9              \\
0.6     & 14                   & 13                & 11                & 10              \\
0.7     & 17                   & 14                & 12                & 11             \\
0.8     & 21                   & 17                & 13                & 12              \\
0.9     & 29                   & 23                & 15                & 13              \\
1.0     & 39                   & 24                & 17                & 15              \\
1.1     & 101                  & 34                & 19                & 17              \\
1.2     & \textbf{Diverge}     & 50                & 22                & 18              \\
1.3     & \textbf{Diverge}     & 84                & 24                & 21              \\
1.4     & \textbf{Diverge}     & 125               & 29                & 24              \\
1.5     & \textbf{Diverge}     & \textbf{Diverge}  & 35                & 29              \\
1.6     & \textbf{Diverge}     & \textbf{Diverge}  & 55                & 42              \\
1.7     & \textbf{Diverge}     & \textbf{Diverge}  & \textbf{Diverge}  & \textbf{Diverge}
\end{tabular}\label{table:ExpCylinder}
\end{table}

\subsection{Flow of Oldroyd-B fluid in a contracted L-shaped geometry}

In our second experiment we reproduced the simulation presented in \cite{ervin2008} in order to compare the performance of Picard and AA-Picard. This experiment consists of the flow of an Oldroyd-B fluid in a planar channel with a contraction with a ratio of 4:1 between the upstream and downstream widths (see Figure \ref{fig:domain2}). 
It is known that the flow changes at the upstream channel from a fully developed Poiseuille flow at the inlet end to a flow having a vortex in the central contraction region of the re-entrant corner and then returns at the downstream channel to a fully developed Poiseuille flow at the outlet end.  
We set the polymer viscosity $\mu_1=\alpha=\frac89$, the solvent viscosity $\mu_2=1-\alpha=\frac19$ and the Weissenberg number was set to $\lambda=\frac{7}{10}$.
The meshes tested in this experiment are listed in table \ref{table:ExpContraction} along with its corresponding DoF. 
Figure \ref{fig:41mesh} presents the coarsest L-shaped geometry mesh in our experiment which has 8,631 DoF when $dx=0.25$ and $dy=0.0625$.
The simulation with Anderson acceleration used the damping factors $\beta_k$ set to $0.5$ and the depth $m$ to 20.
In both Picard and AA-Picard iterations the approximated solution was considered achieved when the nonlinear residual between the last two solutions was smaller than $10^{-6}$ in the infinity norm.

Table \ref{table:ExpContraction} presents the results obtained with both pure Picard and AA-Picard iterations along with the values of $\Vert u\Vert_0$, $\Vert u \Vert_1$ and $\Vert s \Vert_0$ presented in \cite{ervin2008}. 
Comparing the Picard and AA-Picard number of iterations for different mesh resolutions we can see that the number of iteration when using the Picard iteration was very sensitive to the mesh resolution. 
Moreover, in the 256x128 case the Picard iteration diverges. 
On the other hand, we see in Table \ref{table:ExpContraction} that the AA-Picard iteration was convergent for all meshes and had the number of iterations much less sensitive to mesh resolution when compared to Picard.
Thus, in this second experiment we could also see that the use of AA along with the Picard iteration allowed both a lower sensitivity of the iterative method to the mesh resolution and also a greater velocity of convergence when compared to the Picard iteration.

\begin{figure}
    \centering
    \includegraphics[width=1.0\linewidth]{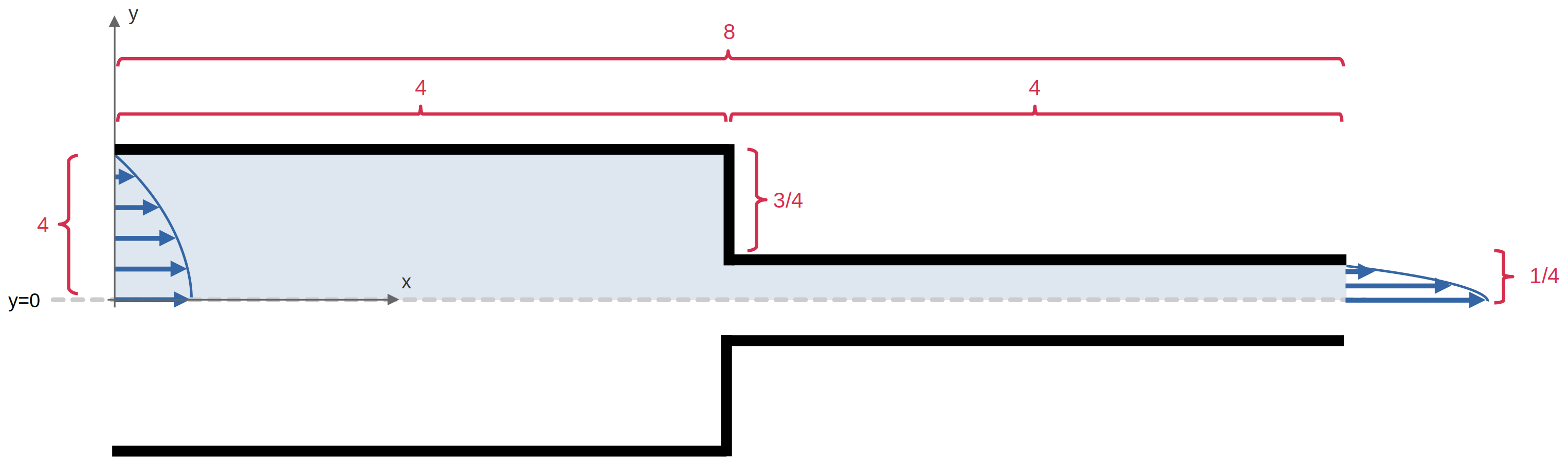}
    \caption{Flow of an Oldroyd-B past a circular cylinder mesh.}
    \label{fig:domain2}
\end{figure}

\begin{figure}
    \centering
    \includegraphics[width=1.0\linewidth]{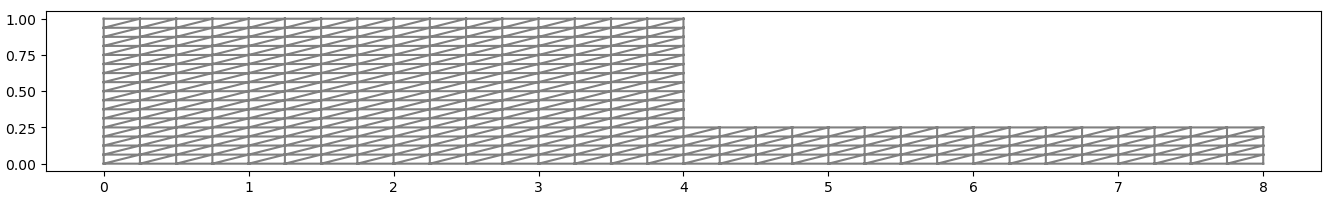}
    \caption{The 32x64 L-shaped geometry mesh.}
    \label{fig:41mesh}
\end{figure}

\begin{table}[]
\centering
\caption{Comparison between the pure Picard Iteration and the Picard Iteration boosted by the Anderson acceleration in the flow of an Oldroyd-B model in a contracted L-shaped geometry experiment.}
\begin{tabular}{|c|c|c|c|c|c|c|c|}
\hline
\multicolumn{2}{|c|}{Mesh} & 256x128  & 32x16    & 64x32    & 128x64   & 256x128  & 512x256  \\ \hline
\multicolumn{2}{|c|}{DoF} & 114811   & 8631     & 33255    & 130503   & 516999   & 2057991  \\ \hline
\multicolumn{2}{|c|}{$\text{dx}$}  & 0.03125 & 0.25     & 
0.125 & 0.0625 & 0.03125   & 0.015625  \\ \hline
\multicolumn{2}{|c|}{$\text{dy}$}  & 0.0078125 & 0.625     & 
0.03125    & 0.015625 & 0.0078125   & 0.00390625  \\ \hline
             & $\Vert u \Vert_0$   & 0.104166 &          &          &          &          &          \\ \cline{2-8} 
Ervin et al. & $\Vert u \Vert_1$ & 0.595209 &          &          &          &          &          \\ \cline{2-8} 
(2008)       & $\Vert s \Vert_0$ & 0.932091 &          &          &          &          &          \\ \cline{2-8} 
             & No. of iter. & (not given)        &          &          &          &          &          \\ \hline
             & $\Vert u \Vert_0$ &          & 0.104152 & 0.104151 & 0.104147 & 0.104144 & -        \\ \cline{2-8} 
Picard       & $\Vert u \Vert_1$ &          & 0.596802 & 0.595947 & 0.595609 & 0.595458 & -        \\ \cline{2-8} 
         & $\Vert s \Vert_0$ &          & 0.929722 & 0.929508 & 0.929386 & 0.929364 & -        \\ \cline{2-8} 
             & No. of iter. &          & 20       & 24       & 38       & 98       & Diverge  \\ \hline
      & $\Vert u \Vert_0$ &          & 0.104152 & 0.104151 & 0.104147 & 0.104144 & 0.104144 \\ \cline{2-8} 
AA-Picard             & $\Vert u \vert_1$ &          & 0.596802 & 0.595947 & 0.595609 & 0.595458 & 0.595398 \\ \cline{2-8} 
     & $\Vert s \Vert_0$ &          & 0.929722 & 0.929508 & 0.929386 & 0.929364 & 0.929383 \\ \cline{2-8} 
 & No. of iter. &          & 11       & 13       & 14       & 16       & 20       \\ \hline
\end{tabular}\label{table:ExpContraction}
\end{table}

\section{Conclusions}

In this study we have introduced, analyzed and evaluated an Anderson accelerated Picard iteration scheme for the Oldroyd-B model of incompressible viscoelastic flows. Firstly we provide a convergence analysis of the proposed nonlinear iterative solver.  Also two numerical simulations, namely the flow around a circular cylinder and the experiment involving an L-shaped geometry, showed that with a good choice of the depth $m$ and damping factor $\beta$, the AA-Picard scheme allows the convergence of the iterative scheme at significantly higher Weissenberg numbers than Picard.  Furthermore, it was observed that the AA-Picard scheme converges within substentially less number iterations when compared to the Picard iteration, in simulations in which both schemes are convergent.

\section{Appendix}

Below we present the main part of our Python/Fenics implementation of the Anderson acceleration algorithm for the Picard iteration solver of the Oldroyd-B model. In general terms we use a peculiar list (\texttt{MyList class}) which we implemented in order to fit the algorithm needs. \texttt{l} is the Weissenberg number ($\lambda$) and \texttt{mu1} and \texttt{mu2} are the polymer ($\mu_1$) and solvent ($\mu_2$) viscosities, respectively. Then, the \texttt{AndersonIteration} function implements the Anderson iteration algorithm. In the \texttt{AndersonIteration} function, \texttt{g} is the Oldroyd-B/Picard solver. \texttt{beta} is the damping factor $\beta_k$ and \texttt{m} is the depth number. At the end of each new Anderson/Picard iteration the \texttt{CalculateError} function calculates the nonlinear residual between the last two solutions.

{\tiny
\begin{lstlisting}[language=Python]
class MyList(list):
    def update(self, index, u, savememory):
        if savememory:
            self.pop(0)
            self.insert(1, x.copy(deepcopy=True))
            self[1].vector()[:] = u
        else:
            self.insert(index, x.copy(deepcopy=True))
            self[index].vector()[:] = u
    def mk_insert(self, index, mk, item):
        N = x.vector()[:].size
        y = x.copy(deepcopy=True)
        y.assign(item)
        if mk-1 == 0:
            self.insert(index, y.vector()[:].reshape(N, 1) )
        else:
            self.pop(0)
            self.insert(1, np.c_[y.vector()[:].reshape(N, 1), self[-1][:, :(mk-1)]] )

def AndersonIteration(domain, oldroydb, anderson):
    global x

    def g(uold, Told):
        w = OBModel.SolveOldroydB(Z, bcs, uold, Told, l, mu1, mu2)
        return w

    x = Function(Z)

    k = 0
    U = MyList([x])
    W = MyList(['none'])
    W.update(1, g(U[0].split()[1], U[0].split()[2]).vector()[:] - U[0].vector()[:], False)
    U.update(1, U[0].vector()[:] + beta * W[1].vector()[:], False)
    error = CalculateError(U)

    E = MyList(['none'])
    F = MyList(['none'])
    gamma = ['none']
    while error['linf'] >= tol and k+1 < kmax:
        k += 1
        W.update(k+1, g(U[-1].split()[1], U[-1].split()[2]).vector()[:] - U[-1].vector()[:], savememory)
        if m == 0:
            U.update(k+1, U[-1].vector()[:] + beta * W[-1].vector()[:], savememory)
        else:
            mk = min(k, m)
            # Creating and updating E and F matrix
            F.mk_insert(k, mk, W[-1]-W[-2])
            E.mk_insert(k, mk, U[-1]-U[-2])
            # Obtaining gamma
            Q, R = np.linalg.qr(F[-1])
            gamma = np.linalg.lstsq(R + (alpha**2)*np.identity(mk), np.dot(Q.T, W[-1].vector()[:]))[0]
            U.update(k+1, U[-1].vector()[:]+beta*W[-1].vector()[:]-np.matmul((E[-1]+beta*F[-1]), gamma), savememory)
        error = CalculateError(U)
    return U[-1]
\end{lstlisting}
}

\bibliographystyle{plain}
\bibliography{graddiv2724}
\end{document}